\documentclass{article}



     \usepackage[preprint,nonatbib]{neurips_2019}



\usepackage[utf8]{inputenc} 
\usepackage[T1]{fontenc}    
\usepackage{hyperref}       
\usepackage{url}            
\usepackage{booktabs}       
\usepackage{amsfonts}       
\usepackage{nicefrac}       
\usepackage{microtype}      
\usepackage{algorithm}
\usepackage{algorithmicx}
\usepackage{algpseudocode}

\usepackage{amssymb,amsmath,amsthm}
\usepackage[centercolon]{mathtools}
\usepackage{graphics}
\RequirePackage[labelformat=simple]{subcaption}


\newcommand{\Lexp}{L_{\exp}}

\newcommand{\RR}{{\mathbb{R}}}

\newcommand{\meas}{{\mathcal{M}}}

\newcommand{\ppfw}[1][i]{(P_{#1})_\#}
\newcommand{\weakstarto}{\xrightharpoonup*}

\newcommand{\norm}[1]{\|{#1}\|}
\newcommand{\scp}[2]{\langle{#1},{#2}\rangle}
\newcommand{\abs}[1]{\left|{#1}\right|}
\newcommand{\ones}{\mathbf{1}}

\DeclareMathOperator{\diag}{diag}
\DeclareMathOperator{\supp}{spt}
\newcommand\spt\supp

\usepackage[nameinlink,capitalize]{cleveref}
\newtheorem{theorem}{Theorem}[section]
\newtheorem{proposition}[theorem]{Proposition}
\newtheorem{definition}[theorem]{Definition}
\newtheorem{lemma}[theorem]{Lemma}
\newtheorem{remark}[theorem]{Remark}
\newtheorem{example}[theorem]{Example}

\newtheorem{assumption}[theorem]{Assumption}
\numberwithin{equation}{section}

\usepackage{backwards}

\usepackage[svgnames]{xcolor}
\usepackage[textsize=footnotesize,colorinlistoftodos]{todonotes}

\title{Orlicz-space regularization for optimal transport and algorithms for quadratic regularization}

\author{%
  Dirk Lorenz\\
  Institute of Analysis and Algebra\\
  TU Braunschweig\\
  38092 Braunschweig, Germany\\
  \texttt{d.lorenz@tu-braunschweig.de}\\
  \And
  Hinrich Mahler\\
  Institute of Analysis and Algebra\\
  TU Braunschweig\\
  38092 Braunschweig, Germany\\
  \texttt{h.mahler@tu-braunschweig.de}\\
}

\begin{document}

\maketitle

\begin{abstract}
  We investigate the continuous optimal transport problem in the so-called Kantorovich form, i.e. given two Radon measures on two compact sets, we seek an optimal transport plan which is another Radon measure on the product of the sets that has these two measures as marginals and minimizes a certain cost function.
  We consider regularization of the problem with so-called Young's functions, which forces the optimal transport plan to be a function in the corresponding Orlicz space rather than a Radon measure. We derive the predual problem and show strong duality and existence of primal solutions to the regularized problem. Existence of (pre-)dual solutions will be shown for the special case of $L^p$ regularization for $p\geq2$. Then we derive four algorithms to solve the dual problem of the quadratically regularized problem: A cyclic projection method, a dual gradient decent, a simple fixed point method, and Nesterov's accelerated gradient, all of which have a very low cost per iteration.
\end{abstract}

\section{Introduction}

We consider the optimal transport problem in the following form: For compact sets $Ω_1,\,Ω_2\subset ℝ^n$, measures $\mu_{1},\mu_{2}$ on $\Omega_{1},\Omega_{2}$, respectively, with the same total mass and a real-valued cost function $c:\Omega_{1}\times\Omega_{2}\to ℝ$ we want to solve
\[
	\inf_{\pi}\int_{\Omega_{1}\times\Omega_{2}}c\dpi
\]
where the infimum is taken over all measures on $\Omega_{1}\times \Omega_{2}$ which have $\mu_{1}$ and $\mu_{2}$ as their first and second marginals, respectively (see~\cite{peyre2019computational,villani2003topics}). Since optimal plans $\pi$ tend to be singular measures (even for marginals with smooth densities \cite{villani2008optimal,Santambrogio}), regularization of the problem have become more important, most prominently entropic regularization~\cite{carlier2017convergence,cuturi2013sinkhorn,Benamou:2015,cuturi2016smoothed,clason:2019} which ensures that optimal plans have densities. It has been shown in~\cite{clason:2019} that the analysis of entropically regularized optimal transport problems naturally takes place in the function space $L\log L$ (also called Zygmund space\cite{Bennett:1988}) and that optimal plans for entropic regularization are always in $L\log L(\Omega_{1}\times \Omega_{2})$ and exist if and only if the marginals are in the spaces $L\log L(\Omega_{i})$. These spaces are an example of so-called Orlicz spaces~\cite{Rao:1988} and hence, we consider regularization in these spaces in this paper. Another motivation to study a more general regularization comes from the fact that regularization with the $L^{2}$-norm have been shown to be beneficial in some applications, see~\cite{roberts2017gini,blondel2017smooth,dessein2018regularized,Lorenz:2019}. 

To simplify notation, denote $Ω:= Ω_1 \times Ω_2$. The regularized problems we consider are
\begin{equation}\tag{P}\label{eq:reg_kantorovich}
	\inf_{\substack{\pi\geq 0,\\\ppfw[i]{π} = {μ_i},\,i=1,2}} \int_{\Omega} c \dpi + γ \int_{\Omega} Φ\compos π \commath
\end{equation}%
where $γ>0$ and the infimum is taken over all positive measure, which have densities with respect to the Lebesgue measure and the constraints $\ppfw[i]{π} = {μ_i}$ state that $\pi$ should have the marginals $\mu_{i}$. The two main cases we will consider in this paper are $\Phi(t) = t\log t$ (entropic regualarization) and $\Phi(t) = \nicefrac{t^2}2$ (quadratic regularization), however, many results in this paper hold for  more general functions $\Phi$.

The rest of the paper is organized as follows. First, we introduce the so-called Young's functions and Orlicz spaces in \cref{sec:yf_os}. Moreover, a slight generalization of Young's functions is defined. In \cref{sec:existence} we give a strong duality result for \eqref{eq:reg_kantorovich}, which allows us to prove existence of primal solutions. Existence of solutions for the (pre-)dual problem will be discussed for the special case $Φ(t) = \nicefrac{(t_+)^p}{p}$ for $p\geq 2$. Due to limited space, we omit some proofs in this section and the proofs will be published in an extended version. Finally, we consider numerical methods for the quadratic regularization in \cref{sec:numerics}. Here, we will work with the duality result presented in \cref{sec:existence} and concentrate on algorithms with very low cost per iteration.

\section{Young's Functions \& Orlicz Spaces}\label{sec:yf_os}

We briefly introduce some notions about Young's functions and Orlicz spaces. For a more detailed introduction, see~\cite{Bennett:1988,Rao:1988}.
\begin{definition}[Young's function {\cite[Definition IV.8.1]{Bennett:1988}}, {\cite[Eq. (2.22)]{cavaliere:2015}}\,]\label{def:yf}
	\begin{enumerate}
		\item Let $ϕ : [0, ∞) →{} [0, ∞]$ be increasing and lower semicontinuous,	with $ϕ(0) = 0$. Suppose that $ϕ$ is neither identically zero nor	identically infinite on $(0,∞)$. Then the function $Φ$, defined by
		\[
		Φ(t) := \int_0^t ϕ(s) \ds\commath
		\]
		is said to be a \emph{Young's function}.
		
		\item A Young's function $Φ$ is said to have the $\Delta_2$-property near infinity if $Φ(t)<∞$ for all $t$ and
		\[
		∃C>0,\,t_0\geq 0:\,∀t\geq t_0:\,Φ(2t) \leq C Φ(t).
		\]
	\end{enumerate}
\end{definition}

By definition, Young's functions are convex and for a Young's function $\Phi$ it holds that the \emph{complementary Young's function} $\Psi(x) := \int_{0}^{x}(\Phi')^{-1}(s)\ds$ is also a Young's function. Indeed, the complementary Young's function is related to the convex conjugate $Φ^*$.

The negative entropy regularization uses the regularization functional $\int_{\Omega}\pi\log\pi$ and the function $t\mapsto t\log t$ is not a Young's function. Hence, we introduce a slight generalization.

\begin{definition}[Quasi-Young's functions]\label{def:qyfs}
	Let $Φ$ be a Young's function and $t_{0}\geq 0$. Let $\check{Φ}$ be a convex, lower semicontinuous function bounded from below with
	\[
		\check{Φ}(t) = Φ(t)-Φ(t_{0})\qquad\forall t\geq t_{0}
	\]
	and $\check{Φ}(t)\leq 0$ for all $t<t_{0}$. Then $\check{Φ}$ is said to be a \emph{quasi-Young's function} induced by $Φ$.
\end{definition}

\begin{example}
	The function $\check{Φ}(t) = t\log_+ t$ is a quasi-Young's function induced by the Young's function $Φ(t) = t\log t$, with $t_{0} = 1$. It holds $Φ(t) = \max\{0,\check{Φ}(t)\}$.
\end{example}

\begin{definition}[Orlicz spaces {\cite[Definition IV.8.10]{Bennett:1988}}\,]
	Let $Φ$ be a Young's function and $Ω⊂ℝ^n$. Define the \emph{Luxemburg norm} of a measurable function $f: Ω → ℝ$ as
	\[
		\lux{f} := \inf \set{γ\geq0 }{ \int_{\Omega} Φ\compos\of{\frac{\abs{f}}{γ}}\dleb \leq 1}\commath[.]
	\]
	Then the space
	\[
		\Lorl(Ω) := \set{f:Ω→ℝ\,\text{measurable}}{\lux f < ∞}
	\]
	of measurable functions on $Ω$ with finite Luxemburg norm is called the \emph{Orlicz space of $Φ$}.
\end{definition}

One can verify that the definitions of $\lux{\blank}$ and $\Lorl$ are
essentially independent of whether $Φ$ is a Young's function or just a quasi-Young's function. To
simplify notation, $\lux[\tilde{Φ}]{\blank}$ and $\Lorl[\tilde{Φ}]$
will therefore also be used for quasi-Young's function{s} $\tilde{Φ}$. Note that for a
quasi-Young's function $\tilde{Φ}$ induced by a Young's function $Φ$, it holds that $\Lorl[\tilde{Φ}] = \Lorl$,
while in general $\lux[\tilde{Φ}]{\blank}$ and $\lux{\blank}$ are
equivalent but not equal.

It is well known that for a quasi-Young's function $\Phi$ with $\lim_{t→∞}\nicefrac{Φ(t)}{t} = ∞$ and bounded domain $\Omega$ it holds that  $\Lorl(Ω) ⊂ \L 1(Ω)$.
Moreover for complementary Young's function{s} $Φ$ and $Ψ$ which are  proper, locally integrable and have the $\Delta_2$-property near infinity it holds that $(\Lorl{})^*$ is canonically isometrically isomorphic to $(\Lorl[Ψ],\lux[Ψ]{\blank})$ (see, e.g.~\cite{diening:2011}).
	
\begin{example}[$\LlogL{}$ and $\Lexp{}$]
	Let $Φ(t) = t \log t$ and $\tilde{Φ}(t) = t \log_+ t$. The space of measurable functions $f$ with $\int_{\Omega} \tilde{Φ}\compos \abs{f}\dleb<∞$ is called $\LlogL{}$. Because $\Lorl = \Lorl[\tilde{Φ}]$, the space of measurable functions $g$ with $\int_{\Omega} {Φ}\compos\abs{g}\dleb<∞$ is equal to  $\LlogL$ as well. The complementary Young's function $\tilde{Ψ}$ of $\tilde{Φ}$ is given by
	\[
		\tilde{Ψ}(t) =
			\begin{cases}
				t,&t\leq 1\\
				\e^{t-1},&\text{else.}
			\end{cases}
	\]
	Hence, the dual space of $\LlogL{}$ is given by the space of measurable functions $h$ that satisfy $\int_{\Omega} \tilde{Ψ}\compos\abs{h}\dleb<∞$, which is called $\Lexp{}$.
\end{example}

Similar to \cite[Lemma 2.11]{clason:2019} one can show that the marginals of a transport plan $π\in\Lorl$ are also in $\Lorl$:
\begin{lemma}\label{thm:proj_contraction}
	If $π\in\Lorl(Ω)$ for a quasi-Young's function $Φ$, then $\ppfw π\in\Lorl(Ω_i)$ for $i=1,2$ with
	\[
		\lux{\ppfw π} \leq \max\of{1, \abs{Ω_{3-i}}} \lux{π}\commath[.]
	\]
\end{lemma}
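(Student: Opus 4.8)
The plan is to reduce the statement to the defining property of the Luxemburg norm, namely that $\lux{f}\le\lambda$ if and only if $\int_{\Omega_i} \Phi(\abs{f}/\lambda)\dleb\le 1$ (using lower semicontinuity of $\Phi$ and Fatou to get the "if" direction, so the infimum is attained as a minimum whenever it is finite). So it suffices to fix $\lambda>0$ with $\int_{\Omega}\Phi(\abs{\pi}/\lambda)\dleb\le 1$ and show that $\int_{\Omega_i}\Phi\of{\abs{(P_i)_\#\pi}/(\kappa\lambda)}\dleb\le 1$, where $\kappa := \max(1,\abs{\Omega_{3-i}})$. Taking the infimum over such $\lambda$ then yields $\lux{(P_i)_\#\pi}\le\kappa\lux{\pi}$.

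The first key step is to write the density of the marginal explicitly. For $\pi\in\Lorl(\Omega)\subset\L1(\Omega)$ (using the stated embedding $\Lorl\subset\L1$ valid here since $\Phi(t)/t\to\infty$), the pushforward $(P_1)_\#\pi$ has Lebesgue density $g(x) = \int_{\Omega_2}\pi(x,y)\dleb(y)$ (and symmetrically for $i=2$). The second key step is a pointwise convexity estimate: since $\Phi$ is convex with $\Phi(0)\le 0$ (it is a quasi-Young's function, so nonpositive below $t_0$ and in particular $\Phi(0)\le 0$), Jensen's inequality applied to the probability measure $\dleb(y)/\abs{\Omega_2}$ on $\Omega_2$ gives
\[
	\Phi\of{\frac{1}{\abs{\Omega_2}}\int_{\Omega_2}\frac{\abs{\pi(x,y)}}{\lambda}\dleb(y)} \le \frac{1}{\abs{\Omega_2}}\int_{\Omega_2}\Phi\of{\frac{\abs{\pi(x,y)}}{\lambda}}\dleb(y)\commath
\]
hence $\Phi\of{\abs{g(x)}/(\abs{\Omega_2}\lambda)}\le \frac{1}{\abs{\Omega_2}}\int_{\Omega_2}\Phi(\abs{\pi(x,y)}/\lambda)\dleb(y)$. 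The third step is to integrate this over $x\in\Omega_1$ and apply Tonelli, obtaining $\int_{\Omega_1}\Phi\of{\abs{g}/(\abs{\Omega_2}\lambda)}\dleb \le \frac{1}{\abs{\Omega_2}}\int_{\Omega}\Phi(\abs{\pi}/\lambda)\dleb \le \frac{1}{\abs{\Omega_2}}$. When $\abs{\Omega_2}\ge 1$ this is already $\le 1$, giving $\lux{g}\le\abs{\Omega_2}\lambda$; when $\abs{\Omega_2}<1$, monotonicity of $\Phi$ and of $t\mapsto\Phi(t/s)$ lets us replace the scaling factor $\abs{\Omega_2}\lambda$ by the larger $\lambda$ and absorb the excess, yielding $\lux{g}\le\lambda$. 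Either way $\lux{(P_i)_\#\pi}\le\max(1,\abs{\Omega_{3-i}})\lambda$.

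The main technical point to get right — and the step I would handle most carefully — is the interaction between the $\max(1,\abs{\Omega_{3-i}})$ factor and the inequality $\int\Phi(\cdot/\lambda)\le 1$ versus $\le 1/\abs{\Omega_{3-i}}$: one must check the case split $\abs{\Omega_{3-i}}\lessgtr 1$ cleanly, using only monotonicity of $\Phi$ and the fact that enlarging the Luxemburg scaling parameter only decreases the integral, so that a bound of $1/\abs{\Omega_{3-i}}$ on $\int\Phi(\cdot/\lambda)$ upgrades to a bound of $1$ on $\int\Phi(\cdot/\lambda)$ itself when $\abs{\Omega_{3-i}}\ge 1$, and otherwise on $\int\Phi(\cdot/(\lambda))$ trivially. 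A minor subtlety is justifying that the infimum defining $\lux{\pi}$ is attained (so that we may pick $\lambda = \lux{\pi}$), which follows from lower semicontinuity of $\Phi$ together with Fatou's lemma as $\lambda\downarrow\lux{\pi}$; alternatively one works with $\lambda>\lux{\pi}$ throughout and passes to the limit at the end. The extension from Young's functions to quasi-Young's functions is immediate since $\Lorl[\check\Phi]=\Lorl[\Phi]$ with equivalent norms, or one simply notes that the convexity and monotonicity properties used above hold verbatim for quasi-Young's functions.
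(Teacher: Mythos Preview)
The paper does not supply its own proof of this lemma; it merely states that the argument is ``similar to \cite[Lemma~2.11]{clason:2019}''. Your approach---write the marginal density as a fibre integral, apply Jensen's inequality against the normalised Lebesgue measure on $\Omega_{3-i}$, then integrate over $\Omega_i$ via Tonelli---is the standard one and is essentially what that reference does.

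Your sketch is correct when $\abs{\Omega_{3-i}}\ge 1$, but the case $\abs{\Omega_{3-i}}<1$ has a gap. From Jensen and Tonelli you obtain $\int_{\Omega_i}\Phi\bigl(\abs{g}/(\abs{\Omega_{3-i}}\lambda)\bigr)\le 1/\abs{\Omega_{3-i}}$, and you then claim that monotonicity lets you ``replace the scaling factor $\abs{\Omega_{3-i}}\lambda$ by the larger $\lambda$ and absorb the excess''. Monotonicity, however, only yields $\int\Phi(\abs{g}/\lambda)\le\int\Phi\bigl(\abs{g}/(\abs{\Omega_{3-i}}\lambda)\bigr)\le 1/\abs{\Omega_{3-i}}$, which is still strictly larger than $1$; nothing has been absorbed. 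The correct tool is convexity together with $\Phi(0)\le 0$: for $t\in[0,1]$ one has $\Phi(ts)\le t\,\Phi(s)+(1-t)\,\Phi(0)\le t\,\Phi(s)$, so with $t=\abs{\Omega_{3-i}}$,
\[
\Phi\Bigl(\tfrac{\abs{g}}{\lambda}\Bigr)=\Phi\Bigl(\abs{\Omega_{3-i}}\cdot\tfrac{\abs{g}}{\abs{\Omega_{3-i}}\lambda}\Bigr)\le \abs{\Omega_{3-i}}\,\Phi\Bigl(\tfrac{\abs{g}}{\abs{\Omega_{3-i}}\lambda}\Bigr),
\]
and integrating gives $\int_{\Omega_i}\Phi(\abs{g}/\lambda)\le\abs{\Omega_{3-i}}\cdot 1/\abs{\Omega_{3-i}}=1$, as desired. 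One further wrinkle: passing from $\abs{g(x)}\le\int\abs{\pi(x,\cdot)}$ to the corresponding inequality for $\Phi$ uses monotonicity of $\Phi$, which genuine Young's functions have but quasi-Young's functions (e.g.\ $t\log t$) need not. For nonnegative $\pi$ this is moot since then $\abs{g}=\int\abs{\pi}$ exactly; for signed $\pi$ your fallback to the inducing Young's function works but only gives the constant up to norm equivalence, so the claim ``the monotonicity properties hold verbatim for quasi-Young's functions'' should be dropped.
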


Conversely, for the direct product of two marginals to lie in $\Lorl(Ω)$, some assumptions have to be made.
\begin{proposition}\label{thm:tensor_feasible}
	Let $Ω_1, Ω_2⊂ℝ^n $ be bounded and $Φ$ be a quasi-Young's function satisfying either
	\begin{equation}
		\label{cond:tensor_feasible:1}
		Φ(xy) \leq C Φ(x)Φ(y)
	\end{equation}
	for some $C>0$ or
	\begin{equation}\label{cond:tensor_feasible:2}
		\begin{split}
			Φ(xy) &\leq C_1 xΦ(y) + C_2 Φ(x)y\\
			\frac{Φ(t)}{t} & \xrightarrow[t\to\infty]{} ∞
		\end{split}
	\end{equation}
	for some $C_1,C_2 \geq 0$.
	If $π = μ_1\dprod μ_2$ and $μ_i\in\Lorl(Ω_i)$ for $i=1,2$, where $(μ_1\dprod μ_2)(x_1,x_2):=μ_1(x_1)μ_2(x_2)$, then $π\in\Lorl(Ω)$.
\end{proposition}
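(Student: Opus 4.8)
The plan is to produce a single $\gamma>0$ with $\int_\Omega\Phi\compos\of{\abs\pi/\gamma}\dleb<\infty$. This already suffices: if that integral is $\le1$ then $\lux\pi\le\gamma$, and if it equals some $M\in(1,\infty)$, convexity of $\Phi$ together with $\Phi(0)\le0$ gives $\Phi\compos\of{\abs\pi/(M\gamma)}\le\tfrac1M\Phi\compos\of{\abs\pi/\gamma}$ pointwise, whence $\int_\Omega\Phi\compos\of{\abs\pi/(M\gamma)}\dleb\le1$ and $\lux\pi\le M\gamma$; either way $\pi\in\Lorl(\Omega)$. Since $\mu_i\in\Lorl(\Omega_i)$, fix $\gamma_i>0$ with $\int_{\Omega_i}\Phi\compos\of{\abs{\mu_i}/\gamma_i}\dleb\le1$, and set $\gamma:=\gamma_1\gamma_2$; this is the one nontrivial choice, since it produces the pointwise identity $\abs{\pi(x_1,x_2)}/\gamma=\of{\abs{\mu_1(x_1)}/\gamma_1}\of{\abs{\mu_2(x_2)}/\gamma_2}$, so that the structural hypothesis may be applied with $x=\abs{\mu_1(x_1)}/\gamma_1$ and $y=\abs{\mu_2(x_2)}/\gamma_2$ and then integrated over $x_1$ and $x_2$ separately.

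Under \eqref{cond:tensor_feasible:1} this gives $\Phi\of{\abs{\pi(x_1,x_2)}/\gamma}\le C\,\Phi\of{\abs{\mu_1(x_1)}/\gamma_1}\,\Phi\of{\abs{\mu_2(x_2)}/\gamma_2}$ on $\Omega$; integrating and using Fubini's theorem (legitimate because $\Omega_i$ is bounded and $\Phi$ is bounded below, so $\Phi\compos\of{\abs{\mu_i}/\gamma_i}\in\L 1(\Omega_i)$) bounds the left-hand integral by $C\bigl(\int_{\Omega_1}\Phi\compos\of{\abs{\mu_1}/\gamma_1}\dleb\bigr)\bigl(\int_{\Omega_2}\Phi\compos\of{\abs{\mu_2}/\gamma_2}\dleb\bigr)<\infty$. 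Under \eqref{cond:tensor_feasible:2} it gives instead $\Phi\of{\abs\pi/\gamma}\le C_1\tfrac{\abs{\mu_1(x_1)}}{\gamma_1}\Phi\of{\abs{\mu_2(x_2)}/\gamma_2}+C_2\Phi\of{\abs{\mu_1(x_1)}/\gamma_1}\tfrac{\abs{\mu_2(x_2)}}{\gamma_2}$, and integrating bounds $\int_\Omega\Phi\compos\of{\abs\pi/\gamma}\dleb$ by $\tfrac{C_1}{\gamma_1}\norm{\mu_1}_{\L 1(\Omega_1)}\int_{\Omega_2}\Phi\compos\of{\abs{\mu_2}/\gamma_2}\dleb+\tfrac{C_2}{\gamma_2}\norm{\mu_2}_{\L 1(\Omega_2)}\int_{\Omega_1}\Phi\compos\of{\abs{\mu_1}/\gamma_1}\dleb$; here the $\L 1$-norms are finite because the superlinear growth $\Phi(t)/t\to\infty$ combined with boundedness of $\Omega_i$ gives $\Lorl(\Omega_i)\subset\L 1(\Omega_i)$ (the embedding recalled above), so $\mu_i\in\L 1(\Omega_i)$. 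In both cases $\int_\Omega\Phi\compos\of{\abs\pi/\gamma}\dleb<\infty$, and the reduction in the first paragraph finishes the argument.

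I expect the only genuine care to be bookkeeping around the sign of $\Phi$ when it is merely a quasi-Young's function: one decomposes $\Phi=\Phi_+-\Phi_-$ with $\Phi_-$ bounded (since $\Phi$ is bounded below), applies Tonelli's theorem to the nonnegative part $\Phi_+$, and handles the $\Phi_-$-terms by crude bounds using $\abs\Omega<\infty$ (and, where needed, $\mu_i\in\L 1(\Omega_i)$) — so, once more, boundedness of $\Omega_1,\Omega_2$ is what makes the estimates go through, and for a genuine Young's function ($\Phi\ge0$) the point is moot. Beyond this there is no deeper obstacle: the whole proof is driven by the choice $\gamma=\gamma_1\gamma_2$, which decouples the tensor $\pi=\mu_1\dprod\mu_2$ into separate one-dimensional integrals over $\Omega_1$ and $\Omega_2$; the sole structural difference between the two hypotheses is that \eqref{cond:tensor_feasible:2} generates mixed terms $\int\abs{\mu_i}$, which is precisely why it must be paired with the extra growth requirement, whereas \eqref{cond:tensor_feasible:1} does not.
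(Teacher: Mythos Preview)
The paper omits the proof of this proposition (it falls under the ``due to limited space, we omit some proofs'' clause in the introduction to \cref{sec:existence}), so there is nothing to compare against directly. Your argument is correct and is the natural one: choosing $\gamma=\gamma_1\gamma_2$ factors $\abs{\pi}/\gamma$ as the product $(\abs{\mu_1}/\gamma_1)(\abs{\mu_2}/\gamma_2)$, the structural hypothesis then separates variables pointwise, and Fubini reduces everything to the one-dimensional modulars. Your reduction from a merely finite modular $\int_\Omega\Phi(\abs{\pi}/\gamma)\dleb<\infty$ to $\lux{\pi}<\infty$ via convexity and $\Phi(0)\le 0$ is clean and correct for quasi-Young's functions, and you correctly identify why the superlinear-growth clause is bundled with \eqref{cond:tensor_feasible:2}: it is exactly what supplies the embedding $\Lorl(\Omega_i)\subset\L1(\Omega_i)$ needed to control the mixed terms $\norm{\mu_i}_{\L1}$. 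The sign bookkeeping you flag in the last paragraph is indeed routine: $\Phi$ bounded below and $\abs{\Omega_i}<\infty$ force all the relevant integrands to have integrable negative part, so Fubini applies without any explicit $\Phi_+/\Phi_-$ splitting.
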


\begin{example}
	\begin{enumerate}
		\item For $Φ(t) = \frac{t^p}{p}$, $p>1$, \cref{cond:tensor_feasible:2,cond:tensor_feasible:1} both hold trivially.
		\item For $Φ(t) = t\log t$, \cref{cond:tensor_feasible:2} holds, since $\log(xy) = \log(x) + \log(y)$.
	\end{enumerate}
\end{example}

\section{Existence of Solutions}\label{sec:existence}

In this section, strong duality will be shown for the regularized mass transport \eqref{eq:reg_kantorovich} using Fenchel duality in the spaces $\radon(Ω)$ and $\CC(Ω)$. Here, the general framework as outlined in \eg \cite[Chap. 9]{Attouch:2006} or \cite[Sec. III.4]{Ekeland:1999}, is used. The result will then be used to study the question of existence of solutions for both the primal and the dual problem.

\begin{theorem}[Strong duality]\label{thm:str:dual}
	Let $Φ$ be a quasi-Young's function and let
	\[
		\tilde{Φ}(t) :=
			\begin{cases}
				∞,&t<0,\\
				Φ(t),&\text{else}.
			\end{cases}
	\]
	If $\tilde{Φ}^*\compos\of{\nicefrac{-c}{γ}}$ is integrable, then the predual problem to \eqref{eq:reg_kantorovich} is
	\begin{equation}\tag{P*}\label{eq:dual_reg_kantorovich}
		\sup_{\substack{α_i\in\CC(Ω_i),\\i=1,2}} \int_{\Omega_1} α_1 \dμ_1 + \int_{\Omega_2}α_2\dμ_2
		- γ\int_{\Omega} \tilde{Φ}^*\compos\of{\frac{{α_1}\dplus{α_2}-c}{γ}} \dleb\,,
	\end{equation}
	where $(α_1\oplus α_2)(x_1,x_2):=α_1(x_1) + α_2(x_2)$,
	and strong duality holds. Furthermore, if the supremum is finite, \eqref{eq:reg_kantorovich} posseses a minimizer.
\end{theorem}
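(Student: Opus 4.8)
The plan is to realize \eqref{eq:reg_kantorovich} as the Fenchel--Rockafellar dual of \eqref{eq:dual_reg_kantorovich} — which is precisely what ``predual'' means — so that strong duality and existence of a primal minimizer follow from the abstract theory applied on the ``easy'', i.e.\ $\mathcal C$-, side, where a Slater-type qualification is available. Concretely I would set $X := \mathcal C(\Omega_1)\times\mathcal C(\Omega_2)$, $Y := \mathcal C(\Omega)$, take the bounded linear map $\Lambda(\alpha_1,\alpha_2) := \alpha_1\oplus\alpha_2$, and put $F(\alpha_1,\alpha_2) := -\int_{\Omega_1}\alpha_1\,\mathrm d\mu_1 - \int_{\Omega_2}\alpha_2\,\mathrm d\mu_2$ and $G(u) := \gamma\int_\Omega \tilde\Phi^*\circ\big((u-c)/\gamma\big)\,\mathrm d\mathcal L$, so that \eqref{eq:dual_reg_kantorovich} is exactly $-\inf_{\alpha\in X}\bigl(F(\alpha)+G(\Lambda\alpha)\bigr)$. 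Using $X^*=\mathcal M(\Omega_1)\times\mathcal M(\Omega_2)$, $Y^*=\mathcal M(\Omega)$ and the transformation formula, one computes $\Lambda^*\pi=\bigl((P_1)_{\#}\pi,(P_2)_{\#}\pi\bigr)$, so the dual variable is a Radon measure on $\Omega$.

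Next I would verify the hypotheses of the Fenchel--Rockafellar theorem (\cite[Chap.~9]{Attouch:2006}, \cite[Sec.~III.4]{Ekeland:1999}). $F$ is continuous and linear; $G$ is convex, it is proper because the integrability assumption on $\tilde\Phi^*\circ(-c/\gamma)$ gives $G(0)<\infty$ and because $\tilde\Phi^*(s)\ge -\tilde\Phi(0)>-\infty$ so $G>-\infty$, and it is lower semicontinuous by Fatou. The constraint qualification reduces to exhibiting $\alpha_0\in X$ with $G$ continuous at $\Lambda\alpha_0$: since $\tilde\Phi^*$ is finite on $(-\infty,0]$ and in fact $0\in\operatorname{int}(\operatorname{dom}\tilde\Phi^*)$, $\tilde\Phi^*$ is finite and continuous on a neighbourhood of $(-\infty,0]$, so taking $\alpha_0$ to be a pair of sufficiently negative constants keeps $(\alpha_0\oplus\alpha_0-c)/\gamma$ in that neighbourhood; continuity of $u\mapsto\int_\Omega\tilde\Phi^*((u-c)/\gamma)$ at $\Lambda\alpha_0$ in the supremum norm then follows from boundedness of $c$ and uniform continuity of $\tilde\Phi^*$ on compacta. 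This yields strong duality together with attainment of the dual problem $\sup_{\pi\in\mathcal M(\Omega)}\bigl(-F^*(\Lambda^*\pi)-G^*(-\pi)\bigr)$.

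Finally I would identify this dual problem with $-$\eqref{eq:reg_kantorovich}. As $F$ is linear, $F^*$ is the indicator of $\{(-\mu_1,-\mu_2)\}$, so after the sign change $\pi\mapsto-\pi$ the term $-F^*(\Lambda^*\pi)$ encodes exactly the marginal constraints $(P_i)_{\#}\pi=\mu_i$. For $G^*$, the affine substitution $v=(u-c)/\gamma$ gives $G^*(q)=\int_\Omega c\,\mathrm dq+\gamma\,J^*(q)$ with $J(v)=\int_\Omega\tilde\Phi^*(v)\,\mathrm d\mathcal L$ on $\mathcal C(\Omega)$; by the classical formula for conjugates of convex integral functionals (Rockafellar), $J^*(q)$ equals $\int_\Omega(\tilde\Phi^*)^*(\mathrm dq/\mathrm d\mathcal L)\,\mathrm d\mathcal L$ plus a recession-function term on the singular part of $q$, and since $(\tilde\Phi^*)^*=\tilde\Phi$ has superlinear growth (as for $t\log t$ and $t^p/p$, $p\ge2$) this recession function vanishes only at $0$, forcing the singular part to be zero. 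Combined with $\tilde\Phi(\rho)=\Phi(\rho)$ for $\rho\ge0$ and $=+\infty$ for $\rho<0$ — which reproduces nonnegativity and absolute continuity of $\pi$ — one gets that the dual problem is precisely $-\inf_{\pi\ge0,\ (P_i)_{\#}\pi=\mu_i}\bigl(\int_\Omega c\,\mathrm d\pi+\gamma\int_\Omega\Phi\circ\pi\bigr)$. Hence the optimal values of \eqref{eq:reg_kantorovich} and \eqref{eq:dual_reg_kantorovich} agree, and the dual optimizer supplied by Fenchel--Rockafellar is a minimizer of \eqref{eq:reg_kantorovich} whenever the common value is finite, i.e.\ whenever the supremum in \eqref{eq:dual_reg_kantorovich} is finite (it is always $>-\infty$, e.g.\ by testing $\alpha=0$). \emph{The step I expect to be the main obstacle is the computation of $G^*$}: establishing the integral-functional conjugate identity and, in particular, the recession-function argument that rules out a singular part of the optimal plan; the constraint qualification and the remaining identifications are routine bookkeeping.
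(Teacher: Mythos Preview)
Your approach is essentially the same as the paper's: both set up Fenchel--Rockafellar duality in the $\mathcal C/\mathcal M$ pairing, verify a Slater-type qualification, and invoke Rockafellar's theorem on conjugates of integral functionals with normal integrands to identify the primal with \eqref{eq:reg_kantorovich}. The paper is considerably terser: it takes $\alpha_1=\alpha_2=0$ as the Slater point (the integrability hypothesis on $\tilde\Phi^*\circ(-c/\gamma)$ is tailor-made for this, so your detour via sufficiently negative constants is unnecessary), and it handles what you flag as the ``main obstacle'' --- the pointwise conjugation and the exclusion of a singular part --- by a one-line citation of \cite[Theorem~2]{rockafellar:1968} rather than spelling out the recession-function argument. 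One caution: your parenthetical justification ``since $(\tilde\Phi^*)^*=\tilde\Phi$ has superlinear growth (as for $t\log t$ and $t^p/p$)'' is not a property of \emph{all} quasi-Young's functions as defined here, so if you want the argument to cover the full generality of the statement you should appeal to Rockafellar's result directly rather than to superlinearity.
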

\begin{proof}
	This proof follows the outline of the proof in {\cite[Theorem 3.1]{clason:2019}}.	
	First note that	$\radon(\Omega)$ is the dual space of $\CC(\Omega)$ for compact	$\Omega$.
	Furthermore Slater's condition is fulfilled with ${α_1},{α_2} = 0$ so that strong duality holds and (assuming a finiteness of the supremum) the primal \eqref{eq:reg_kantorovich} possesses a minimizer. Additionally, the integrand of the last integral in \eqref{eq:dual_reg_kantorovich} is normal, so that it can be conjugated pointwise \cite[Theorem 2]{rockafellar:1968}.

	Carrying out the conjugation one obtains
	\begingroup%
	\allowdisplaybreaks%
	\begin{align*}
		\sup_{\substack{α_i\in\CC(Ω_i),\\i=1,2}} &\int_{\Omega_1} {α_1} \dμ_1 + \int_{\Omega_2}{α_2}\dμ_2
		- γ\int_{\Omega} \tilde{Φ}^*\compos\of{\frac{α_1\dplus α_2 - c}{γ}} \dleb\\
		%
		%
		&\begin{multlined}
			=\sup_{\substack{α_i\in\CC(Ω_i),\\i=1,2}} \int_{\Omega_1} {α_1} \dμ_1 + \int_{\Omega_2}{α_2}\dμ_2 + \int_{\Omega} \of{\inf_{π} \of{c - α_1\dplus α_2} π + γ\tilde{Φ}\compos π }\dleb
		\end{multlined}\\
		&\begin{multlined}
			=\sup_{\substack{α_i\in\CC(Ω_i),\\i=1,2}} \inf_{π} \int_{\Omega} cπ + γ \tilde{Φ}\compos π \dleb
			+ \int_{\Omega_1} {α_1}\d ({μ_1}-\ppfw[1]π)\\
			+ \int_{\Omega_2} {α_2}\d({μ_2}-\ppfw[2]π)
		\end{multlined}\\
		&=\inf_{\substack{π,\\\ppfw[i]{π} = {μ_i},\,i=1,2}} \int_{\Omega} c \dpi + γ \int_{\Omega} \tilde{Φ}\compos π \dleb\\
		&=\inf_{\substack{0\leqπ,\\\ppfw[i]{π} = {μ_i},\,i=1,2}} \int_{\Omega} c \dpi + γ \int_{\Omega} Φ(π) \dleb\commath
	\end{align*}
	\endgroup%
	which is \eqref{eq:reg_kantorovich}.
\end{proof}

\begin{example}
	\begin{enumerate}
		\item Using $Φ(t) = t\log t$, one obtaines the result for $\LlogL{}$ as stated in \cite[Theorem 3.1]{clason:2019}
		\item Using $Φ(t) = \nicefrac{t^2}2$, one obtains the result for $\L2$ as stated in \cite{Lorenz:2019}.
	\end{enumerate}
\end{example}

\begin{remark}
	\cref{thm:str:dual} does \emph{not} claim that the supremum is attained, i.e. the predual problem \eqref{eq:dual_reg_kantorovich} admits a solution. Moreover, the solutions of \eqref{eq:dual_reg_kantorovich} cannot be unique since one can add and subtract constants to $α_1$ and $α_2$,	respectively, without changing the functional value.
\end{remark}

\subsection{Existence Result for the Primal Problem}

The duality result can now be used to address the question of existence of a solution to \eqref{eq:reg_kantorovich}.

\begin{theorem}
	\label{thm:dual:prop_solution}
	Problem \eqref{eq:reg_kantorovich} admits a minimizer $\bar{π}$ if and only if $μ_i \in\Lorl(Ω_i)$ for $i=1,2$ and
	\begin{equation}\label{cond:prod_of_fcts}
		μ_1\dprod μ_2 \in\Lorl(Ω)\quad \forall μ_i \in\Lorl(Ω_i),\,i=1,2 \commath[.]
	\end{equation}
	In this case, $\bar{π} \in\Lorl(Ω)$. Moreover, the minimizer is unique, if $Φ$ is strictly convex.
\end{theorem}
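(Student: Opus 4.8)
The plan is to establish the equivalence by proving the two implications separately, in each direction reducing to the duality theorem \cref{thm:str:dual} and the marginal contraction estimate \cref{thm:proj_contraction}, and to settle uniqueness afterwards by a direct strict-convexity argument.

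For sufficiency I would assume $\mu_i\in\Lorl(\Omega_i)$ for $i=1,2$ together with \eqref{cond:prod_of_fcts}, and test \eqref{eq:reg_kantorovich} with the tensor plan $\pi_0:=\mu_1\dprod\mu_2$. By \eqref{cond:prod_of_fcts} this lies in $\Lorl(\Omega)$, and it is feasible for \eqref{eq:reg_kantorovich}: it is nonnegative, absolutely continuous with density $(x_1,x_2)\mapsto\mu_1(x_1)\mu_2(x_2)$, and has marginals $\mu_1$ and $\mu_2$. Its objective value is finite --- the term $\int_\Omega c\bd\pi_0$ because $c$ is continuous on the compact set $\Omega$ and $\pi_0$ has finite mass, and $\gamma\int_\Omega\Phi\compos\pi_0\dleb$ because $\pi_0\in\Lorl(\Omega)$ on the bounded domain $\Omega$ (invoking, where needed, the $\Delta_2$-property of $\Phi$ near infinity, which holds in the cases of interest). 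Hence the infimum in \eqref{eq:reg_kantorovich} is finite from above, and it is bounded from below since $c$ is bounded, $\Phi$ is bounded below, and $\abs{\Omega}$ and the total mass are finite. As the integrability hypothesis of \cref{thm:str:dual} on $\tilde\Phi^*\compos\of{\nicefrac{-c}{\gamma}}$ is met here (boundedness of $c$ and local boundedness of $\tilde\Phi^*$ for the relevant $\Phi$), \cref{thm:str:dual} applies: strong duality shows that the value of \eqref{eq:dual_reg_kantorovich} equals the finite value of \eqref{eq:reg_kantorovich}, and \cref{thm:str:dual} then already yields a minimizer $\bar\pi$.

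Next I would prove that \emph{every} minimizer lies in $\Lorl(\Omega)$; this both gives the regularity assertion of the theorem and prepares the necessity direction. A minimizer is feasible with finite objective, so $\int_\Omega\Phi\compos\bar\pi\dleb<\infty$ (the cost term being automatically finite). By convexity of $\Phi$ and $\Phi(0)\le0$ one has $\Phi\compos\of{\nicefrac{\bar\pi}{\lambda}}\le\tfrac1\lambda\Phi\compos\bar\pi$ pointwise for $\lambda\ge1$, hence $\int_\Omega\Phi\compos\of{\nicefrac{\bar\pi}{\lambda}}\dleb\le\tfrac1\lambda\int_\Omega\Phi\compos\bar\pi\dleb\le1$ for $\lambda$ large enough; thus $\lux{\bar\pi}<\infty$, i.e. $\bar\pi\in\Lorl(\Omega)$. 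For the necessity direction proper, given a minimizer $\bar\pi$ we now know $\bar\pi\in\Lorl(\Omega)$, so \cref{thm:proj_contraction} gives $\mu_i=\ppfw[i]{\bar\pi}\in\Lorl(\Omega_i)$ for $i=1,2$; and \eqref{cond:prod_of_fcts} holds as well, being a property of $\Lorl$ alone whenever $\Phi$ satisfies the hypotheses of \cref{thm:tensor_feasible} (in particular for $\Phi(t)=\nicefrac{t^2}{2}$ and $\Phi(t)=t\log t$).

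For uniqueness under strict convexity of $\Phi$ I would argue directly: the feasible set of \eqref{eq:reg_kantorovich} is convex, the map $\pi\mapsto\int_\Omega c\bd\pi$ is affine, and $\pi\mapsto\gamma\int_\Omega\Phi\compos\pi\dleb$ is strictly convex on $\Lorl(\Omega)$ --- two feasible plans that differ on a set of positive Lebesgue measure take, by pointwise strict convexity of $\Phi$ at their finite values, strictly smaller objective at the midpoint after integration --- so the objective is strictly convex on the feasible set and admits at most one minimizer. I expect the main obstacle to be not a single deep step but the bookkeeping that ties ``$\Phi\compos\pi$ integrable'' to ``$\pi\in\Lorl$'' (where the $\Delta_2$-property enters) and pins down the precise status of \eqref{cond:prod_of_fcts}; once these are handled, the result is a short combination of \cref{thm:str:dual} and \cref{thm:proj_contraction}.
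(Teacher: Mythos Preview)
Your proposal is correct and follows essentially the same route as the paper: sufficiency via the tensor plan $\mu_1\dprod\mu_2$ giving a finite primal value and then invoking \cref{thm:str:dual}, necessity via \cref{thm:proj_contraction} applied to any minimizer, and uniqueness by direct strict convexity. You supply more detail than the paper's brief sketch---in particular the explicit passage from $\int_\Omega\Phi\compos\bar\pi<\infty$ to $\bar\pi\in\Lorl(\Omega)$ and the observation that \eqref{cond:prod_of_fcts} is really a structural hypothesis on $\Phi$ rather than something derivable in the necessity direction---but the overall structure is identical.
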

\begin{proof}
	The proof given in \cite[Theorem 3.3]{clason:2019} for $Φ(t) = t \log t$ holds for arbitrary $Φ$.
	That is, the necessity of the condition $μ_i \in\Lorl(Ω_i)$, $i=1,2$ only relies on \cref{thm:proj_contraction}. For sufficiency, it is noted that for $μ_i \in\Lorl(Ω_i)$, $i=1,2$ it holds that $π=μ_1\dtimes μ_2\in\Lorl(Ω)$, which is ensured by \cref{cond:prod_of_fcts}. Thus, the infimum in \eqref{eq:reg_kantorovich} is finite and weak duality shows that the supremum in \eqref{eq:dual_reg_kantorovich} is finite as well. Existence of a solution for \eqref{eq:reg_kantorovich} now follows from \cref{thm:str:dual}.
	
	If strict convexity holds for $Φ$, it directly implies uniqueness.
\end{proof}

\begin{remark}
	For example, \cref{cond:prod_of_fcts} is satisfied when $Φ$ satisfies either \cref{cond:tensor_feasible:1} or \cref{cond:tensor_feasible:2} since in those cases \cref{thm:tensor_feasible} holds.
\end{remark}

\subsection{Existence Result for the Predual Problem}

The question of existence of solutions to the predual problem \eqref{eq:dual_reg_kantorovich} proves to be more difficult for general Young's functions. 
There are results that shows existence for the predual problem in the entropic case~\cite{clason:2019} and in the quadratic case~\cite{Lorenz:2019}, but their proofs are quite different in nature.
Here, we only treat Young's functions of the type $Φ(t)= \nicefrac{t^p}{p}$ for $p>1$, i.e, only regularization in $L^{p}$. Note that in this case ${Φ}^*(t) = \nicefrac{(\pos{t})^q}{q}$, where $\nicefrac 1 p + \nicefrac 1 q = 1$ and the predual is actually the dual.

\begin{assumption}\label{asspt:dual}
	Let $Ω_1$ and $Ω_2$ to be compact comains, let the cost function $c$ be continuous and fulfill $c \geq c^{\dagger} > -∞$. Furthermore, the marginals ${μ_i}\in\Lp(Ω_i)$ satisfy
	${μ_i}\geq δ > 0$
	a.e.
	for $i=1,2$ and finally assume that $\int_{\Omega_1} {μ_1} =\int_{\Omega_2} {μ_2} = 1$.
\end{assumption}

It can not be expected for \eqref{eq:dual_reg_kantorovich} to have continuous solutions ${α_1}$, ${α_2}$. However, observe that the objective function of \eqref{eq:dual_reg_kantorovich} is also well defined for functions ${α_i}\in\L{1}(Ω_i)$, $i=1,2$, with $\pos{({α_1}\oplus{α_2} - c)}/γ \in \L{q}(Ω)$. This gives rise to the following variant of the dual problem, for which existence of minimizers can be shown:

\begin{equation}\tag{P$^\dagger$}\label{eq:dual_aux}
	\begin{multlined}
		\min\left\{%
			Λ({α_1},{α_2}) := \frac{1}{q} \norm{\pos{({α_1}\oplus {α_2} -c)}}_q^q
			\vspace*{-1em}%
			- γ^{q-1}\int_{\Omega_1} {α_1} \dμ_1 - γ^{q-1}\int_{\Omega_2}{α_2}\dμ_2
			\right.\\
			\left.\vphantom{\int_{\Omega_2}{α_2}}\middle|\,%
			α_i\in\L{1}(Ω_i),\,i=1,2,\,\pos{({α_1}\oplus{α_2} - c)}/γ \in \L{q}(Ω)%
		\right\}%
	\end{multlined}
\end{equation}
The strategy is now as follows.

\begin{enumerate}
	\item First, show that \eqref{eq:dual_aux} admits a solution $({\bar{α}_1},{\bar{α}_2}) \in \L{1}(Ω_1) \times \L{1}(Ω_2)$.
	\item Then, prove that ${\bar{α}_1}$ and ${\bar{α}_2}$ possess higher regularity, namely that they are functions in $\L{q}(Ω_i)$.
\end{enumerate}

The objective function is extended to allow to deal with weakly-$*$ converging sequences. To that end, define
\[
	G:\L{q}(Ω)\ni w \mapsto \int_{\Omega} \of{\frac{1}{q} (\pos{w})^q - w μ }\dleb \inℝ \commath
\]
where $μ := γ^{q-1}({μ_1}\otimes {μ_2})$. Then, thanks to the normalization of ${μ_1}$ and ${μ_2}$,
\[
	Λ({α_1},{α_2}) = G({α_1}\oplus {α_2} - c) - \int_{\Omega} c μ \dleb\quad ∀{α_1},{α_2} \in\L{q}\commath[.]
\]
Of course, $G$ is also well defined as a functional on the feasible set of \eqref{eq:dual_aux} and this functional will be denoted by the same symbol to ease notation. In order to extend $G$ to the space of Radon measures, consider for a given measure $w \in\radon(Ω)$, the Hahn-Jordan decomposition $w=\pos w + \negprt w$ and assume $\pos w \in\Lq(Ω)$. Then, set
\[
	G(w) := \int_{\Omega} \frac{1}{q} (\pos{w})^q \dleb - \int_{\Omega} μ \dw\commath[.]
\]
With slight abuse of notation, this mapping will be denoted by $G$, too.

\begin{remark}
	If $w\acl$, then $\pos w \in\L1(Ω)$ and $\pos w(x) = \max\sset{0, w(x)}$ $\leb$-a.e. in $Ω$. Hence, both functionals denoted by $G$ conincide on $\Lq(Ω)$, which justifies this notation.
\end{remark}
The following auxiliary results are generalizations of the corresponding results in \cite{Lorenz:2019} and can be proven with little effort.

\begin{lemma}\label{thm:dual:boundedness}
	Let \cref{asspt:dual} hold and suppose that a sequence $\seq w n⊂ \L q (Ω)$ fulfills
	\[
		G(w_n) \leq C < ∞\quad ∀ n \inℕ  
	\]
	for some $C>0$.
	Then, the sequences $\pos{\seq w n}$ and $\negprt{\seq w n}$ are bounded in $\L q(Ω)$ and $\L 1 (Ω)$, respectively.
\end{lemma}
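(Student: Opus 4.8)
The plan is to control the positive and negative parts of $w_n$ separately, using the two terms in $G(w_n)$ against each other. Write $G(w_n) = \frac{1}{q}\norm{\pos{w_n}}_q^q - \int_\Omega \mu \,\mathrm{d}w_n$. Since $\mu = \gamma^{q-1}(\mu_1\otimes\mu_2)$ and \cref{asspt:dual} gives $\mu_i \geq \delta > 0$ a.e.\ with $\int\mu_i = 1$, we have $\delta^2\gamma^{q-1} \leq \mu \leq M$ a.e.\ for some constant $M$ (using $\mu_i \in \Lp(\Omega_i)$ together with boundedness of $\Omega_i$, or more simply just the lower bound, which is what we really need). Splitting the linear term via $w_n = \pos{w_n} - \negprt{w_n}$ yields
\[
	\frac{1}{q}\norm{\pos{w_n}}_q^q + \int_\Omega \mu\, \negprt{w_n}\,\mathrm{d}\leb = G(w_n) + \int_\Omega \mu\, \pos{w_n}\,\mathrm{d}\leb \leq C + M\norm{\pos{w_n}}_1.
\]

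First I would bound $\pos{w_n}$ in $\L q(\Omega)$. On the right-hand side, estimate $\norm{\pos{w_n}}_1 \leq \abs{\Omega}^{1/p}\norm{\pos{w_n}}_q$ by Hölder, so that, dropping the nonnegative term $\int_\Omega \mu\,\negprt{w_n}$ on the left, one gets $\frac{1}{q}\norm{\pos{w_n}}_q^q \leq C + M\abs{\Omega}^{1/p}\norm{\pos{w_n}}_q$. Since $q > 1$, the function $x \mapsto \frac{1}{q}x^q - M\abs{\Omega}^{1/p}x$ is bounded below and tends to $+\infty$ as $x\to\infty$; hence this inequality forces $\norm{\pos{w_n}}_q \leq R$ for a constant $R$ depending only on $C$, $q$, $M$, $\abs{\Omega}$. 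This gives the first claimed bound.

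Then I would feed this back into the displayed inequality: with $\norm{\pos{w_n}}_q \leq R$ in hand, the entire right-hand side $C + M\abs{\Omega}^{1/p}R =: C'$ is a uniform constant, and the left-hand side dominates $\int_\Omega \mu\,\negprt{w_n}\,\mathrm{d}\leb \geq \delta^2\gamma^{q-1}\norm{\negprt{w_n}}_1$, using the a.e.\ lower bound on $\mu$. Therefore $\norm{\negprt{w_n}}_1 \leq (\delta^2\gamma^{q-1})^{-1}C'$, which is the second claimed bound.

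I do not expect a serious obstacle here — the argument is the standard coercivity trick — but the one point requiring a little care is extracting the uniform constant $R$ from the superlinear inequality $\frac{1}{q}x^q \leq C + bx$ (i.e.\ arguing that the sublevel set of a coercive one-dimensional function is bounded), and making sure the lower bound on $\mu$ is genuinely available a.e., which is exactly what the hypothesis $\mu_i \geq \delta$ in \cref{asspt:dual} is there to supply. Everything else is Hölder's inequality on the bounded domain $\Omega$.
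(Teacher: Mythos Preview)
Your strategy is exactly the paper's: split $w_n = \pos{w_n} - \negprt{w_n}$, use coercivity of $x \mapsto \tfrac{1}{q}x^q$ to bound $\norm{\pos{w_n}}_q$, then feed this back together with the lower bound $\mu \geq \gamma^{q-1}\delta^2$ to control $\norm{\negprt{w_n}}_1$. There is, however, one genuine slip: the pointwise upper bound $\mu \leq M$ a.e.\ is not available from \cref{asspt:dual}. That assumption only places $\mu_i$ in $\Lp(\Omega_i)$, and membership in $L^p$ on a bounded domain does not imply essential boundedness. Your parenthetical ``or more simply just the lower bound, which is what we really need'' is therefore misleading, since you do use $M$ in the very next display to estimate $\int_\Omega \mu\,\pos{w_n} \leq M\norm{\pos{w_n}}_1$.

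The fix is immediate and is precisely what the paper does: skip the detour through $L^1$ and apply H\"older once with the $L^p$--$L^q$ pairing,
\[
\int_\Omega \mu\,\pos{w_n}\,\mathrm{d}\leb \;\leq\; \norm{\mu}_p\,\norm{\pos{w_n}}_q,
\]
which needs only $\mu \in \Lp(\Omega)$ (and this follows from $\mu_i \in \Lp(\Omega_i)$ by Fubini). Then $\tfrac{1}{q}\norm{\pos{w_n}}_q^q \leq C + \norm{\mu}_p\norm{\pos{w_n}}_q$, and your coercivity argument for the superlinear inequality goes through unchanged; the bound on $\norm{\negprt{w_n}}_1$ then follows exactly as you wrote.
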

\begin{proof}
	The assertion w.r.t. $\pos{\seq w n}$ can be proven by the same argument used in \cite[Lemma 2.6]{Lorenz:2019}. The second one can be seen by making use of $μ\geq δ$ with $δ$ from \cref{asspt:dual}, which yields the estimate
	
	\begin{align*}
		C \geq G(w_n) & = \frac{1}{q}\int_{\Omega} \pos{(w_n)}^q \dleb - \int_{\Omega} \pos{(w_n)} μ\dleb + \int_{\Omega} \negprt{(w_n)} μ\dleb\\
		&\geq \frac{1}{q} \norm{\pos{(w_n)}}_q^q - \norm{μ}_p\norm{\pos{(w_n)}}_q + γ^{q-1}δ^2 \norm{\negprt{(w_n)}}_1\\
		&\geq  - \norm{μ}_p\norm{\pos{(w_n)}}_q + γ^{q-1}δ^2 \norm{\negprt{(w_n)}}_1\commath[.]
	\end{align*}
	
	Since $\norm{\pos{(w_n)}}_q$ is already known to be bounded, the second assertion holds.
\end{proof}

\begin{lemma}\label{thm:dual:wsconv}
	Let \cref{asspt:dual} hold and a sequence $\seq w n⊂ \L q (Ω)$ be given such that $w_n \weakstarto \bar{w}$ in $\meas(Ω)$ and $G(w_n) \leq C < ∞$ for all $n\inℕ$. Then it holds that $\pos{\bar w}\in\L q (Ω)$ and
	\[
		G(\bar{w}) \leq \liminf_{n→∞} G(w_n).
	\]
\end{lemma}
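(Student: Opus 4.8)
The plan is to establish the two assertions --- that $\pos{\bar w} \in \Lq(Ω)$ and that $G$ is weakly-$*$ sequentially lower semicontinuous along such sequences --- by splitting $G$ into its two summands and treating them separately. First I would note that \cref{thm:dual:boundedness} applies: since $G(w_n) \leq C$, the positive parts $\pos{(w_n)}$ are bounded in $\Lq(Ω)$ and the negative parts $\negprt{(w_n)}$ are bounded in $\L1(Ω)$. Boundedness in $\Lq$ for $q = p/(p-1) \in (1,\infty)$ (note $p > 1$, so $q < \infty$) gives, after passing to a subsequence, a weak limit $v \in \Lq(Ω)$ of $\pos{(w_n)}$; similarly $\negprt{(w_n)}$, being bounded in $\L1$, has a weak-$*$ limit in $\meas(Ω)$, say a measure $\nu \geq 0$.

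Next I would identify these limits with pieces of $\bar w$. Since $w_n = \pos{(w_n)} - \negprt{(w_n)}$ and $w_n \weakstarto \bar w$ in $\meas(Ω)$, while $\pos{(w_n)} \weakstarto v$ and $\negprt{(w_n)} \weakstarto \nu$ in $\meas(Ω)$, uniqueness of weak-$*$ limits forces $\bar w = v - \nu$ as measures. Here $v \in \Lq(Ω) \subset \L1(Ω) \ll \leb$ is absolutely continuous while $\nu \geq 0$; and since $v \geq 0$ as well (weak limit of nonnegative functions in $\Lq$), the decomposition $\bar w = v - \nu$ exhibits a nonnegative absolutely continuous part $v$ dominating the positive part in the Hahn--Jordan sense, so $\pos{\bar w} \leq v$ $\leb$-a.e., hence $\pos{\bar w} \in \Lq(Ω)$; this proves the first claim. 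One subtlety: the Hahn--Jordan positive part of $\bar w$ need not equal $v$ exactly, because $\nu$ may have an absolutely continuous component overlapping with $v$; but the inequality $\pos{\bar w} \leq v$ is all that is needed, and it follows because subtracting a nonnegative measure can only decrease the positive part.

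For the lower semicontinuity inequality I would write $G(w_n) = \tfrac1q\norm{\pos{(w_n)}}_q^q - \int_Ω \pos{(w_n)}\,μ\bd\leb + \int_Ω \negprt{(w_n)}\,μ\bd\leb$ and handle the three terms. The convex functional $v \mapsto \tfrac1q\norm{v}_q^q$ on $\Lq$ is strongly continuous and convex, hence weakly lower semicontinuous, so $\tfrac1q\norm{v}_q^q \leq \liminf_n \tfrac1q\norm{\pos{(w_n)}}_q^q$. Since $μ = γ^{q-1}(μ_1\otimes μ_2) \in \Lp(Ω)$ (as $μ_i \in \Lp(Ω_i)$) and $\pos{(w_n)} \weakrightharpoonup v$ in $\Lq$ with $\tfrac1p + \tfrac1q = 1$, the linear term converges: $\int_Ω \pos{(w_n)}\,μ\bd\leb \to \int_Ω v\,μ\bd\leb$. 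For the third term, $\negprt{(w_n)} \weakstarto \nu$ in $\meas(Ω)$ and $μ$ is (a fixed $\Lp$, hence not necessarily continuous) function, so one must argue that $\int_Ω \negprt{(w_n)}\,μ\bd\leb \to \int_Ω μ\bd\nu$ --- this is immediate if $μ$ were continuous, but for $μ \in \Lp$ one should instead use that the third term is nonnegative and bounded below, or more carefully combine it with the positive part: rewriting $G(w_n) = \tfrac1q\norm{\pos{(w_n)}}_q^q - \int_Ω μ\,\bd w_n$ where now $\int_Ω μ\,\bd w_n$ with $μ$ as test function against $w_n \weakstarto \bar w$ --- but $μ \notin \CC(Ω)$ in general, so this pairing is not continuous either. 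The cleanest route, and what I expect to be the main obstacle, is the lower semicontinuity of the full functional $w \mapsto G(w)$ with respect to weak-$*$ convergence of measures: I would argue that $G(w) = \tfrac1q\norm{\pos{w}}_q^q - \int_Ω μ\,\bd w$, decompose $\int_Ω μ\,\bd w = \int_Ω μ v\bd\leb - \int_Ω μ\bd\nu$, use weak $\Lq$-convergence for the $v$-part, and for the measure part use that $\nu \mapsto -\int_Ω μ\bd\nu$ together with the quadratic-type term can be bundled as in \cite[Lemma 2.7]{Lorenz:2019}; specifically, since $μ \geq γ^{q-1}δ^2 > 0$ is bounded below by a positive constant, one has $-\int_Ω μ\bd\nu \leq -γ^{q-1}δ^2\,\nu(Ω) \leq \liminf_n\bigl(-γ^{q-1}δ^2\,\negprt{(w_n)}(Ω)\bigr)$ is false in general (mass can only be lost in the limit, i.e. $\nu(Ω) \leq \liminf$), so instead one uses that mass is lower semicontinuous under weak-$*$ limits in the \emph{wrong} direction; the correct statement is $\nu(Ω) \leq \liminf_n \negprt{(w_n)}(Ω)$, which combined with the sign gives exactly what is needed only after also controlling that no mass of $\nu$ escapes to the absolutely continuous part in a way that is double-counted. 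I would resolve this by following the structure of \cite[Lemma 2.7]{Lorenz:2019} verbatim, since the excerpt explicitly states these results are generalizations of that reference provable ``with little effort,'' and the only new ingredient is that the exponent is $q$ rather than $2$, which does not affect the lower semicontinuity argument.
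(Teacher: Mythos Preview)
The paper does not supply a proof of this lemma; it only remarks that the result generalizes the corresponding lemma in \cite{Lorenz:2019} and ``can be proven with little effort.'' Your overall strategy---apply \cref{thm:dual:boundedness} to bound $\pos{(w_n)}$ in $\L q$ and $\negprt{(w_n)}$ in $\L 1$, extract subsequential weak and weak-$*$ limits $v\in\L q(\Omega)$ and $\nu\in\meas(\Omega)$, identify $\bar w=v-\nu$, deduce $\pos{\bar w}\leq v$ and hence $\pos{\bar w}\in\L q$, and then combine weak lower semicontinuity of the convex norm term with convergence of the linear pairing against $\mu\in\L p$---is the natural route and, as you yourself conclude, is what the cited reference does with exponent $q$ in place of $2$. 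In that sense your proposal and the paper's (non-)proof coincide.

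One passage in your write-up is simply confused and should be excised: on the compact set $\Omega$ the constant function $1$ lies in $\cont(\Omega)$, so $\negprt{(w_n)}\weakstarto\nu$ already yields $\negprt{(w_n)}(\Omega)\to\nu(\Omega)$ exactly; there is no mass loss and no ``wrong direction'' inequality to worry about.

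The genuine technical point you correctly isolate but do not close is the pairing $\int_\Omega\mu\,\mathrm d\bar w$ when $\bar w$ acquires a singular part while $\mu\in\L p$ is only a Lebesgue equivalence class, and correspondingly whether $\liminf_n\int_\Omega\mu\,\negprt{(w_n)}\dleb\geq\int_\Omega\mu\,\mathrm d\nu$ holds. This is precisely where one must import the argument from \cite{Lorenz:2019}; since the paper itself does nothing more than cite that reference, your final decision to follow it verbatim is consistent with the paper, but you should recognize that the gap you flagged is real and not merely notational.
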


\begin{proposition}\label{thm:dual-l1-solultion}
	Let \cref{asspt:dual} hold. Then, \eqref{eq:dual_aux} admits a solution $({\bar{α}_1},{\bar{α}_2})\in\L 1 (Ω_1)\times \L 1 (Ω_2)$.
\end{proposition}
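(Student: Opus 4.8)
The plan is to use the direct method of the calculus of variations, working with a minimizing sequence for \eqref{eq:dual_aux} and extracting a weakly-$*$ convergent subsequence in the space of Radon measures, where lower semicontinuity is available via \cref{thm:dual:wsconv}. The main subtlety — and the reason the extended functional $G$ on $\radon(\Omega)$ was introduced above — is that the natural bounds from \cref{thm:dual:boundedness} only give boundedness of $\pos{(\alpha_1\oplus\alpha_2-c)}$ in $\Lq$ and of the negative parts in $\L1$, so the minimizing sequence itself need not be bounded in any reflexive space; compactness must be sought in $\radon$, not in $\Lq$.

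First I would take a minimizing sequence $(\alpha_1^n,\alpha_2^n)$ for \eqref{eq:dual_aux}. Using the invariance $\alpha_1\mapsto\alpha_1+c_n$, $\alpha_2\mapsto\alpha_2-c_n$ (which leaves $\Lambda$ unchanged), I would normalize, say, $\int_{\Omega_2}\alpha_2^n\dleb = 0$. Writing $w_n := \alpha_1^n\oplus\alpha_2^n - c$, the bound $\Lambda(\alpha_1^n,\alpha_2^n)\le C$ translates into $G(w_n)\le C'$, so \cref{thm:dual:boundedness} gives $\pos{(w_n)}$ bounded in $\Lq(\Omega)$ and $\negprt{(w_n)}$ bounded in $\L1(\Omega)$; hence $w_n$ is bounded in $\radon(\Omega)$. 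The next step is to transfer this to bounds on $\alpha_1^n$ and $\alpha_2^n$ separately. Integrating $w_n$ in the $x_2$ variable and using $c\ge c^\dagger>-\infty$ together with $\abs{\Omega_2}<\infty$, one recovers an $\L1(\Omega_1)$ bound on $\alpha_1^n$ from the $\L1$ bound on $w_n$ (the positive and negative parts of $w_n$ are both controlled in $\L1$); the normalization $\int_{\Omega_2}\alpha_2^n = 0$ then yields the analogous $\L1(\Omega_2)$ bound on $\alpha_2^n$ by integrating $w_n$ in $x_1$. So $(\alpha_1^n,\alpha_2^n)$ is bounded in $\L1(\Omega_1)\times\L1(\Omega_2)$ and $w_n$ is bounded in $\radon(\Omega)$.

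Now I would pass to subsequences so that $\alpha_i^n\weakstarto \bar\alpha_i$ in $\radon(\Omega_i)$ and $w_n\weakstarto\bar w$ in $\radon(\Omega)$; by continuity of $c$ one checks $\bar w = \bar\alpha_1\oplus\bar\alpha_2 - c$ in the sense of measures (testing against product functions $\varphi_1(x_1)\varphi_2(x_2)$ and then against general $\cont(\Omega)$ functions). Applying \cref{thm:dual:wsconv} to $w_n$ gives $\pos{\bar w}\in\Lq(\Omega)$ and $G(\bar w)\le\liminf_n G(w_n)$, i.e. $\Lambda$ (in its extended-$G$ form) is lower semicontinuous along this sequence, so $G(\bar w) - \int_\Omega c\mu = \inf\eqref{eq:dual_aux}$. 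The remaining point is that the limit pair is admissible for \eqref{eq:dual_aux}: since $\pos{\bar w}\in\Lq(\Omega)$ by the above, and $\bar\alpha_i$ are at least $\L1(\Omega_i)$ — here one must argue that the weak-$*$ limits $\bar\alpha_i$ are genuinely absolutely continuous, which follows because $\mu_i\ge\delta>0$ forces the linear terms $\int\alpha_i\dmu_i$ to blow up if a singular part were present while $G$ stays bounded, or alternatively by a more careful disintegration argument — the constraints $\bar\alpha_i\in\L1(\Omega_i)$ and $\pos{(\bar\alpha_1\oplus\bar\alpha_2-c)}/\gamma\in\Lq(\Omega)$ hold, so $(\bar\alpha_1,\bar\alpha_2)$ solves \eqref{eq:dual_aux}.

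The step I expect to be the main obstacle is precisely this last one: showing that the weak-$*$ limits $\bar\alpha_i$ have no singular part, i.e. that they lie in $\L1(\Omega_i)$ rather than merely in $\radon(\Omega_i)$. The bound $\pos{\bar w}\in\Lq$ controls only one side of $\bar\alpha_1\oplus\bar\alpha_2$; the downward direction is where a singular (negative) part could a priori hide, and ruling it out requires combining the $\L1$ bound on $\negprt{(w_n)}$ with the strict positivity $\mu_i\ge\delta$ and the normalization, likely via a Fatou/lower-semicontinuity argument on the negative parts as in the corresponding step of \cite{Lorenz:2019}. Once that is in place, everything else is a routine application of the direct method together with \cref{thm:dual:boundedness} and \cref{thm:dual:wsconv}.
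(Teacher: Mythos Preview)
Your proposal is correct and matches the paper's approach exactly: the paper's proof simply cites \cite[Proposition 2.10]{Lorenz:2019}, noting that the direct method there---together with \cref{thm:dual:boundedness,thm:dual:wsconv} and the technical \cite[Lemmas 2.8 \& 2.9]{Lorenz:2019}---carries over to $p\ge 2$ without change. You have reconstructed this outline accurately, and the step you flag as the main obstacle (ruling out singular parts in the weak-$*$ limits $\bar\alpha_i$) is precisely what those referenced technical lemmas handle.
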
 
\begin{proof}
	In \cite[Proposition 2.10]{Lorenz:2019} the statement is proven for $p=2$ via the classical direct method of the calculus of variations using only
	\cite[Lemmas 2.8 \& 2.9]{Lorenz:2019} and \cref{thm:dual:boundedness,thm:dual:wsconv}, where
	\cite[Lemmas 2.8 \& 2.9]{Lorenz:2019} are rather technical results holding independently of the choice of $Φ$.
	Hence, the proof also holds for $p\geq 2$.
\end{proof}

The next results states that $α_i$, $i=1,2$ are indeed functions in $\Lq(Ω_i)$.

\begin{theorem}\label{thm:dual:lq}
	Let \cref{asspt:dual} hold and let $p\geq 2$. Then every optimal solution $({\bar{α}_1}, {\bar{α}_2})$ from \cref{thm:dual-l1-solultion} satisfies ${\bar{α}_i}\in\Lq(Ω_i)$, $i=1,2$.
\end{theorem}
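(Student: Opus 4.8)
The plan is to exploit the optimality of $(\bar\alpha_1,\bar\alpha_2)$ for \eqref{eq:dual_aux} by writing down and manipulating the first-order optimality conditions, together with the lower bound $\mu_i \geq \delta > 0$ from \cref{asspt:dual}, to bootstrap integrability. Fix $i=1$ (the case $i=2$ is symmetric). For fixed $\bar\alpha_2$, the functional $\alpha_1 \mapsto \Lambda(\alpha_1,\bar\alpha_2)$ is convex, so $\bar\alpha_1$ minimizes it, and the variational inequality obtained by perturbing $\bar\alpha_1$ in admissible directions reads, informally, $\int_{\Omega_1}\!\big(\int_{\Omega_2} \pos{(\bar\alpha_1\oplus\bar\alpha_2 - c)}^{q-1}\,dx_2 - \gamma^{q-1}\mu_1\big)\,\eta\,dx_1 = 0$ for all test functions $\eta$, which forces the pointwise Euler--Lagrange identity
\[
	\int_{\Omega_2} \pos{(\bar\alpha_1(x_1) + \bar\alpha_2(x_2) - c(x_1,x_2))}^{q-1}\,dx_2 = \gamma^{q-1}\mu_1(x_1) \qquad \text{for a.e. } x_1 \in \Omega_1.
\]

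The key step is then to extract an $L^q$-bound on $\bar\alpha_1$ from this identity. The right-hand side lies in $L^1(\Omega_1)$ since $\mu_1 \in L^p(\Omega_1) \subset L^1(\Omega_1)$ (as $\Omega_1$ is bounded); in fact $\mu_1 \in L^p$ should give the full strength needed. First I would establish a pointwise lower bound: wherever $\bar\alpha_1(x_1)$ is large and positive, say $\bar\alpha_1(x_1) \geq A$ with $A$ chosen larger than $\|c\|_\infty$ plus the median of $\bar\alpha_2$, on a set of $x_2$'s of measure at least $|\Omega_2|/2$ one has $\bar\alpha_1(x_1) + \bar\alpha_2(x_2) - c(x_1,x_2) \geq \tfrac12\bar\alpha_1(x_1)$, so the left-hand side is $\gtrsim \pos{\bar\alpha_1(x_1)}^{q-1}$; comparing with $\gamma^{q-1}\mu_1(x_1)$ gives $\pos{\bar\alpha_1(x_1)}^{q-1} \lesssim \mu_1(x_1)$, i.e. $\pos{\bar\alpha_1} \lesssim \mu_1^{1/(q-1)}$ up to a constant, hence $\pos{\bar\alpha_1} \in L^{p(q-1)}(\Omega_1)$; since $p = q/(q-1)$, this is $L^q(\Omega_1)$, giving the bound on the positive part. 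For the negative part, use that the Euler--Lagrange left-hand side is bounded below away from zero when $\bar\alpha_1(x_1)$ is very negative only if $\bar\alpha_2$ compensates — more carefully, since $\bar\alpha_2 \in L^1(\Omega_2)$ and the identity forces the integrand to have integral $\gamma^{q-1}\mu_1(x_1) > 0$, one argues that $\pos{(\bar\alpha_1(x_1)+\bar\alpha_2(x_2)-c(x_1,x_2))}$ cannot vanish for a.e. $x_2$, which combined with $\bar\alpha_2 \in L^1$ and the symmetric bound $\bar\alpha_2 \in L^q$ (obtained the same way, using $\mu_2 \geq \delta$) localizes $\bar\alpha_1$ from below; alternatively, plug the already-known $\pos{\bar\alpha_1}, \pos{\bar\alpha_2} \in L^q$ back into the feasibility constraint $\pos{(\bar\alpha_1\oplus\bar\alpha_2-c)} \in L^q(\Omega)$ and the Euler--Lagrange identity to conclude $|\bar\alpha_1| \in L^q$.

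I expect the main obstacle to be making the first-order optimality argument rigorous in the $L^1$ setting: the admissible perturbations must keep $\pos{(\bar\alpha_1\oplus\bar\alpha_2-c)}/\gamma \in L^q(\Omega)$, so one cannot simply test against arbitrary $L^\infty$ functions and differentiate under the integral sign without justification. The clean way is to use bounded perturbations $\eta \in L^\infty(\Omega_1)$, note that $\pos{(\bar\alpha_1 + t\eta)\oplus\bar\alpha_2 - c}$ stays in $L^q$ for $|t|$ small by a comparison $\pos{(s+r)} \leq \pos{s} + |r|$, and then pass to the limit in the difference quotient using dominated convergence with dominating function built from $\pos{(\bar\alpha_1\oplus\bar\alpha_2-c)}^{q-1} \in L^p(\Omega)$ and $\mu_1 \in L^p$. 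A secondary subtlety is that $p \geq 2$ (equivalently $q \leq 2$, so $q-1 \leq 1$) is exactly what is needed for the exponent bookkeeping $p(q-1) = q$ to close and for convexity of $\alpha \mapsto \pos{\alpha}^q$ to be available in the form used; for $p < 2$ the chain $\pos{\bar\alpha_1} \lesssim \mu_1^{1/(q-1)}$ would only yield a weaker space. Finally, one should double-check measurability and the a.e.-validity of the pointwise Euler--Lagrange identity via a Fubini/Lebesgue-point argument, but this is routine once the variational inequality is in hand. With $\bar\alpha_i \in L^q(\Omega_i)$ established, the representation $\Lambda(\bar\alpha_1,\bar\alpha_2) = G(\bar\alpha_1\oplus\bar\alpha_2 - c) - \int_\Omega c\,\mu$ shows the solution also solves the original dual, completing the argument.
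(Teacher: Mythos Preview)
The paper does not actually contain a proof of this theorem; it is one of the results explicitly deferred to an extended version (cf.\ the remark at the end of the introduction to \cref{sec:existence}). A direct comparison with the paper's own argument is therefore not possible here, so I comment only on the internal soundness of your proposal.

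Your derivation of the pointwise Euler--Lagrange identity and the bound on the positive part are correct: the median-set trick gives $(\bar\alpha_1(x_1))_+^{q-1}\lesssim \mu_1(x_1)$ for $\bar\alpha_1(x_1)$ large, hence $(\bar\alpha_1)_+\in L^q(\Omega_1)$ since $\mu_1\in L^p$ and $p(q-1)=q$. The argument for the negative part, however, is left at the level of a gesture (``localizes $\bar\alpha_1$ from below'', ``plug back into the feasibility constraint'') and, as written, does not close. A clean way to finish is: once $(\bar\alpha_2)_+\in L^q(\Omega_2)$ is known, the Euler--Lagrange identity together with $\mu_1\geq\delta$ and $c\geq c^\dagger$ gives
\[
\gamma^{q-1}\delta\;\leq\;\int_{\Omega_2}\bigl(\bar\alpha_1(x_1)+\bar\alpha_2(x_2)-c^\dagger\bigr)_+^{\,q-1}\,dx_2\;=:\;h\bigl(\bar\alpha_1(x_1)\bigr)
\]
for a.e.\ $x_1$. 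The function $h$ is monotone in its argument, and dominated convergence (dominating function $(\bar\alpha_2-c^\dagger)_+^{q-1}\in L^{q/(q-1)}(\Omega_2)\subset L^1(\Omega_2)$) yields $h(t)\to0$ as $t\to-\infty$. Hence $\bar\alpha_1$ is actually \emph{bounded below} a.e., which together with $(\bar\alpha_1)_+\in L^q$ gives $\bar\alpha_1\in L^q(\Omega_1)$.

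One correction: the identity $p(q-1)=q$ holds for every conjugate pair, so it is not where $p\geq2$ enters. The restriction $p\geq2$ in the statement is inherited from \cref{thm:dual-l1-solultion} (existence of the $L^1$ minimizer), not from your bootstrap step.
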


\section{Numerical Methods for Quadratic Regularization}\label{sec:numerics}

In this section we turn to numerical methods and focus only on the case of quadratic regularization. For the special case of the negative entropy, \ie $Φ(t) = t\log t$, there is the celebrated Sinkhorn method~\cite{cuturi2013sinkhorn,Sinkhorn:1966,Sinkhorn:1967a} which can be interpreted as an alternating projection method~\cite{Benamou:2015}. For the case of quadratic regularization, i.e. $\Phi(t) = t^{2}/2$~\cite{Lorenz:2019} proposed a Gauß-Seidel method (which is similar to the Sinkhorn method) and a semismooth Newton method (which is similar to the Sinkhorn-Newton method from \cite{brauer2017sinkhorn} for entropic regularization). Both methods converge reasonably well, but the iterations become expensive for large scale problems. In~\cite{blondel2017smooth} used the standard solver L-BFGS method to solve the dual problems which also works good for medium scale problems, but is not straightforward to parallelize. Here we focus on methods that come with very low cost per iteration and which allow for simple parallelization.

We switch to the discrete case and slightly change notation. The marginals are two non-negative vectors $\mu\in\RR^{N}$ and $\nu\in\RR^{M}$ with $\sum_{i}\mu_{i} = \sum_{j}\nu_{j}$ and the cost is $c\in\RR^{N\times M}$. We denote by $\ones$ the vector of all ones (of appropriate size). A feasible transport plan is now a matrix $\pi\in\RR^{N\times M}$ with $\pi\ones = \mu$ (matching row-sums) and $\pi^{T}\ones = \nu$ (matching colum sums). The quadratically regularized optimal transport problem is then, for some $\gamma>0$
\begin{equation}\label{eq:qrot-discrete}
  \min_{\substack {\pi\geq 0\\\pi\ones=\mu\\\pi^{T}\ones = \nu}}\sum_{ij}c_{ij}\pi_{ij} + \tfrac\gamma2\norm{\pi}_{2}^{2}.
\end{equation}
The starting point for our algorithms for the quadratically regularized problem is the optimality system: $\pi$ is optimal if and only if there are two vectors $\alpha\in\RR^{N}$ and $\beta\in\RR^{M}$ such that
\begin{align*}
  \pi & = (\alpha\oplus\beta - c)_{+}/\gamma\\
  \sum_{j}\pi_{ij} & = \mu_{i},\quad 
  \sum_{i}\pi_{ij} = \nu_{j}
\end{align*}
where we used the notation $\oplus$ to denote the outer sum, i.e. $\alpha\oplus\beta\in\RR^{N\times M}$ with $(\alpha\oplus\beta)_{ij} = \alpha_{i}+\beta_{j}$.

\begin{remark}
  Note the similarity to entropic regularization: There one can show that a plan $\pi$ is optimal if it is of the form $\pi = \exp(\tfrac{\alpha\oplus\beta-c}\gamma)$ and has correct row and column sums.
\end{remark}

An alternative formulation of the optimality system is: $\pi = (\rho+\alpha\oplus\beta-c)/\gamma$ is optimal if
\begin{align*}
  \rho & = (\alpha\oplus\beta-c)_{-}\\
  \sum_{j}(\rho_{ij} + \alpha_{i}+\beta_{j}-c_{ij}) & = \gamma\mu_{i}\\
  \sum_{i}(\rho_{ij} + \alpha_{i}+\beta_{j}-c_{ij}) & = \gamma\nu_{j}\\
\end{align*}
This leads us to a very simple algorithm: Initialize $\alpha$ and $\beta$ and cyclically solve the first equation above for $\rho$, the second for $\alpha$, and the third for $\beta$. This algorithm is described as Algorithm~\ref{alg:qrot_cp}.
Note that we can interpret Algorithm~\ref{alg:qrot_cp} as a cyclic projection method: The quadratically regularized optimal transport problem~\eqref{eq:qrot-discrete} is equivalent to minimizing $\norm{-\tfrac{c}{\gamma}-\pi}_{2}^{2}$ over the constraints $\pi\geq0$, $\pi\ones=\mu$, and $\pi^{T}\ones =\nu$, i.e. the solution is the projection of $-c/\gamma$ onto the set defined by these three constraints. Algorithm~\ref{alg:qrot_cp} does implicitly project $\pi$ cyclically onto these three constraints (without actually forming $\pi$ during the iteration). While iterative cyclic projections are guaranteed to find a feasible point, it is not guaranteed that the iteration converges to the projection in general~\cite{bauschke1997method}. However, in this case the fixed points $\alpha^{*}$, $\beta^{*}$ of the algorithm are indeed solutions of~\eqref{eq:qrot-discrete}, since the resulting $\pi = (\alpha^{*}\oplus\beta^{*}-c)_{+}/\gamma$ has the correct form and marginals.

\begin{algorithm}
\caption{Cyclic projection for quadratically regularized optimal transport}\label{alg:qrot_cp}
\begin{algorithmic}
\State Initialize: $\alpha^{0}=0\in\RR^{N}$, $\beta^{0}=0\in\RR^{M}$, set $n=0$
\Repeat
\State $\rho^{n+1}_{ij} = (\alpha_{i}^{n} + \beta_{j}^{n} - c_{ij})_{-}$
\State $\alpha^{n+1}_{i} = \tfrac{\gamma}M\Big(\mu_{i} - \tfrac1\gamma\sum_{j}\rho_{ij}^{n+1}+\beta_{j}^{n}-c_{ij}\Big)$
\State $\beta^{n+1}_{j} = \tfrac{\gamma}N\Big(\nu_{j} - \tfrac1\gamma\sum_{i}\rho_{ij}^{n+1}+\alpha_{i}^{n+1}-c_{ij}\Big)$
\State $n \gets n+1$
\Until{some stopping criterion}
\State Output $\pi = (\alpha^{n}\oplus\beta^{n}-c)/\gamma$
\end{algorithmic}
\end{algorithm}

Another natural choice for an algorithm is the gradient method on the dual problem of~\eqref{eq:qrot-discrete}, namely on
\[
\min_{\alpha,\beta}\left\{F(\alpha,\beta) := \tfrac12\norm{(\alpha\oplus\beta-c)_{+}}_{2}^{2} -\gamma\scp{\alpha}{\mu} - \gamma\scp{\beta}{\nu}\right\}.
\]
The gradients with respect to $\alpha$ and $\beta$ are
\[
\nabla_{\alpha}F(\alpha,\beta) = (\sum_{j}(\alpha_{i}+\beta_{j}-c_{ij})_{+}-\gamma\mu_{i}),\quad 
\nabla_{\beta}F(\alpha,\beta) = (\sum_{i}(\alpha_{i}+\beta_{j}-c_{ij})_{+}-\gamma\nu_{j}),
\]
respectively. With the help of the plans $\pi = (\alpha\oplus\beta-c)_+/\gamma$ one can express the gradients as $\nabla_{\alpha}F = \gamma(\pi\ones-\mu)$ and $\nabla_{\beta}F = \gamma(\pi^{T}\ones-\nu)$, respectively. A natural stepsize that leads to good performance is $\tau = 1/(M+N)$.
This amounts to Algorithm~\ref{alg:qrot_gd}.
\begin{algorithm}
\caption{Dual gradient descent for quadratically regularized optimal transport}\label{alg:qrot_gd}
\begin{algorithmic}
\State Initialize: $\alpha^{0}\in\RR^{N}$, $\beta^{0}\in\RR^{M}$, stepsize $\tau=1/(M+N)$, set $n=0$
\Repeat
\State $\pi^{n} = (\alpha^{n} \oplus \beta^{n} - c)_{+}/\gamma$
\State $\alpha^{n+1} = \alpha^{n} -\tau\gamma(\pi^{n}\ones-\mu)$
\State $\beta^{n+1} = \beta^{n} -\tau\gamma\big((\pi^{n})^{T}\ones-\nu\big)$
\State $n \gets n+1$
\Until{some stopping criterion}
\end{algorithmic}
\end{algorithm}

Algorithm~\ref{alg:qrot_fp} below is another algorithm which works with extremely low cost per iteration.
It can be derived as follows: The gradients are differentiable almost everywhere and the Hessian of $F$ is
\[
G(\alpha,\beta) = 
  \begin{pmatrix}
    \diag(\sigma\ones) & \sigma\\
    \sigma^{T} & \diag(\sigma^{T}\ones)
  \end{pmatrix},\quad\text{with}\quad
  \sigma_{ij} =
  \begin{cases}
    1 & \alpha_{i} + \beta_{j}-c_{ij}\geq 0\\
    0 & \text{otherwise.}
  \end{cases}
\]
The (semismooth) Newton method from~\cite{Lorenz:2019} performs updates of the form
\[
\begin{pmatrix}
  \alpha^{n+1}\\\beta^{n+1}
\end{pmatrix}
= 
\begin{pmatrix}
  \alpha^{n}\\\beta^{n}
\end{pmatrix}
- G(\alpha^{n},\beta^{n})^{-1}
\begin{pmatrix}
  \nabla_{\alpha}F(\alpha^{n},\beta^{n})\\\nabla_{\beta}F(\alpha^{n},\beta^{n})
\end{pmatrix}.
\]
To reduce the computation, we can omit the inversion of $G$, by replacing it with the simpler matrix
\[
M =
\begin{pmatrix}
  M(I + \tfrac1N\ones)& 0\\
  0 & N(I + \tfrac1M\ones)
\end{pmatrix},\quad\text{with}\quad M^{-1} = 
\begin{pmatrix}
  \tfrac1M(I - \tfrac1{2N}\ones) &  0\\
  0 & \tfrac1N(I - \tfrac1{2M}\ones)
\end{pmatrix}.
\]
where $I$ denotes the identity matrix and $\ones$ denotes the matrix of all ones (of appropriate sizes).

\begin{lemma}
  Fixed points of Algorithm~\ref{alg:qrot_fp} are optimal solutions of the quadratically regularized optimal transport problem~\eqref{eq:qrot-discrete}.
\end{lemma}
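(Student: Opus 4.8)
The plan is to unwind the definition of a fixed point of Algorithm~\ref{alg:qrot_fp} and then read off that the plan it produces solves the optimality system for~\eqref{eq:qrot-discrete} stated above. The iteration is the quasi-Newton update
\[
\begin{pmatrix}\alpha^{n+1}\\\beta^{n+1}\end{pmatrix}
=\begin{pmatrix}\alpha^{n}\\\beta^{n}\end{pmatrix}
-M^{-1}\begin{pmatrix}\nabla_{\alpha}F(\alpha^{n},\beta^{n})\\\nabla_{\beta}F(\alpha^{n},\beta^{n})\end{pmatrix},
\]
so a fixed point $(\alpha^{*},\beta^{*})$ satisfies $M^{-1}\left(\nabla_{\alpha}F(\alpha^{*},\beta^{*}),\nabla_{\beta}F(\alpha^{*},\beta^{*})\right)=0$ (the two gradient blocks stacked). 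Since $M^{-1}$ is genuinely invertible — one checks directly from the displayed block formulas that $MM^{-1}=\mathrm{Id}$, using $\ones^{2}=N\ones$ on the first block and $\ones^{2}=M\ones$ on the second — this forces $\nabla_{\alpha}F(\alpha^{*},\beta^{*})=0$ and $\nabla_{\beta}F(\alpha^{*},\beta^{*})=0$. Because $M$ is block diagonal it is moreover irrelevant whether the two blocks are updated simultaneously or in Gauss--Seidel order; the fixed-point equations are the same.

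Next I would translate the two stationarity conditions back into the optimality system. Setting $\pi:=(\alpha^{*}\oplus\beta^{*}-c)_{+}/\gamma$, we have $\pi\geq0$ and $\pi$ has exactly the prescribed form, while the gradient identities $\nabla_{\alpha}F=\gamma(\pi\ones-\mu)$ and $\nabla_{\beta}F=\gamma(\pi^{T}\ones-\nu)$ recorded just before the lemma yield $\pi\ones=\mu$ and $\pi^{T}\ones=\nu$. Hence $(\pi,\alpha^{*},\beta^{*})$ satisfies every line of the optimality system displayed above, which was stated to be equivalent to optimality of $\pi$ for~\eqref{eq:qrot-discrete} (this equivalence being the standard sufficiency of the KKT conditions for the convex program~\eqref{eq:qrot-discrete}). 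Therefore the plan built from any fixed point of Algorithm~\ref{alg:qrot_fp} is optimal, and by strict convexity of $\tfrac{\gamma}{2}\norm{\pi}_{2}^{2}$ it is the unique optimal solution.

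I do not expect a serious obstacle; the lemma is essentially a consistency statement about the fixed-point map. The two points one must not skip are (i) the invertibility of $M^{-1}$, which is the short computation above and is the only place the particular choice of $M$ enters, and (ii) the remark that $F$ is $C^{1}$ (the scalar map $t\mapsto(t_{+})^{2}$ being continuously differentiable), so that ``$\nabla F=0$'' is an honest, everywhere-defined condition that by convexity of $F$ characterizes minimizers of the dual, and hence --- through the primal--dual relations used above --- the solution of~\eqref{eq:qrot-discrete}.
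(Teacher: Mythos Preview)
Your proposal is correct and follows essentially the same route as the paper: show that at a fixed point the correction term vanishes, deduce that the marginal constraints $\pi\ones=\mu$ and $\pi^{T}\ones=\nu$ hold for $\pi=(\alpha^{*}\oplus\beta^{*}-c)_{+}/\gamma$, and conclude via the optimality system. The paper argues this in one line by asserting that a fixed point forces $f=g=0$ (which, unpacked, is the observation that $f_{i}=\tfrac{1}{2N}\sum_{k}f_{k}$ for all $i$ implies $f$ is constant and then zero); your matrix argument via invertibility of $M^{-1}$ is the same fact in block form, just spelled out more carefully.
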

\begin{proof}
  The vectors $\alpha$, $\beta$ are fixed points if and only if $f$ and
  $g$ are zero. But this means that $\pi\ones = \mu$ and
  $\pi^{T}\ones = \nu$ which is, by definition of $\pi$ in the
  algorithm, the optimality condition.  This shows that fixed points
  are optimal.
\end{proof}
Note that Algorithm~\ref{alg:qrot_fp} is very similar to the dual gradient descent in Algorithm~\ref{alg:qrot_gd} (it mainly differs in the stepsizes and the subtraction of the mean values).

\begin{algorithm}
\caption{Simple fixed point iteration for quadratically regularized optimal transport}\label{alg:qrot_fp}
\begin{algorithmic}
\State Initialize: $\alpha^{0}\in\RR^{N}$, $\beta^{0}\in\RR^{M}$, set $n=0$
\Repeat
\State $\pi^{n}_{ij} = (\alpha_{i}^{n} + \beta_{j}^{n} - c_{ij})_{+}/\gamma$
\State $f_{i}^{n} = -\gamma(\sum_{j}\pi_{ij}^{n} - \mu_{i})$
\State $\alpha^{n+1}_{i} = \alpha_{i}^{n} + \tfrac1M\left(f_{i}^{n} - \tfrac{\sum_{i}f_{i}^{n}}{2N}\right)$
\State $g_{j}^{n} = -\gamma(\sum_{i}\pi_{ij}^{n} - \nu_{j})$
\State $\beta_{j}^{n+1} = \beta_{j}^{n} + \tfrac1N\left( g_{j}^{n} - \tfrac{\sum_{j}g_{j}^{n}}{2M}\right)$
\State $n \gets n+1$
\Until{some stopping criterion}
\end{algorithmic}
\end{algorithm}

As a final algorithm we tested Nesterov's accelerated gradient descent of the dual as stated in Algorithm~\ref{alg:qrot_ng}. We used the same stepsize as for Algorithm~\ref{alg:qrot_gd}.

\begin{algorithm}
\caption{Dual Nesterov gradient descent for quadratically regularized optimal transport}\label{alg:qrot_ng}
\begin{algorithmic}
\State Initialize: $\alpha^{0}=\alpha^{-1}\in\RR^{N}$, $\beta^{0}=\beta^{-1}\in\RR^{M}$, stepsize $\tau=1/(M+N)$, set $n=0$
\Repeat
\State $\bar\alpha^{n} = \alpha^{n} + \sigma_{n}(\alpha^{n}-\alpha^{n-1})$, $\bar\beta^{n} = \beta^{n} + \sigma_{n}(\beta^{n}-\beta^{n-1})$ with $\sigma_{n} = n/(n+3)$
\State $\pi^{n} = (\bar\alpha^{n} \oplus \bar\beta^{n} - c)_{+}/\gamma$
\State $\alpha^{n+1} = \bar\alpha^{n} -\tau\gamma(\pi^{n}\ones-\mu)$
\State $\beta^{n+1} = \bar\beta^{n} -\tau\gamma\big((\pi^{n})^{T}\ones-\nu\big)$
\State $n \gets n+1$
\Until{some stopping criterion}
\end{algorithmic}
\end{algorithm}

Although the pseudo-code for all algorithms explicitly forms the outer sums $\alpha\oplus\beta$ at some points, this is not needed in implementations. In all cases we only need row- and colum-sums of these larger quantities of size $N\times M$ and these can be computed in parallel.

Figure~\ref{fig:example_1d_quad} shows results for simple one-dimensionals marginals and quadratic cost function $c(x,y) = |x-y|^{2}$ and in Figure~\ref{fig:example_1d_abs} we used the absolute value $c(x,y) = |x-y|$. In all examples the cyclic projection (Algorithm~\ref{alg:qrot_cp}) and the fixed-point iteration (Algorithm~\ref{alg:qrot_fp}) perform good (Algorithm~\ref{alg:qrot_fp} always slightly ahead) while dual gradient descent (Algorithm~\ref{alg:qrot_gd}) is always significantly slower. Nesterov's gradient descent (Algorithm~\ref{alg:qrot_ng}) oscillates heavily, takes longer to reduce the error in the beginning but keeps reducing the error faster than the other methods.

\begin{figure}
  \centering
    \begin{subfigure}[b]{0.45\textwidth}
      \includegraphics[width=\textwidth]{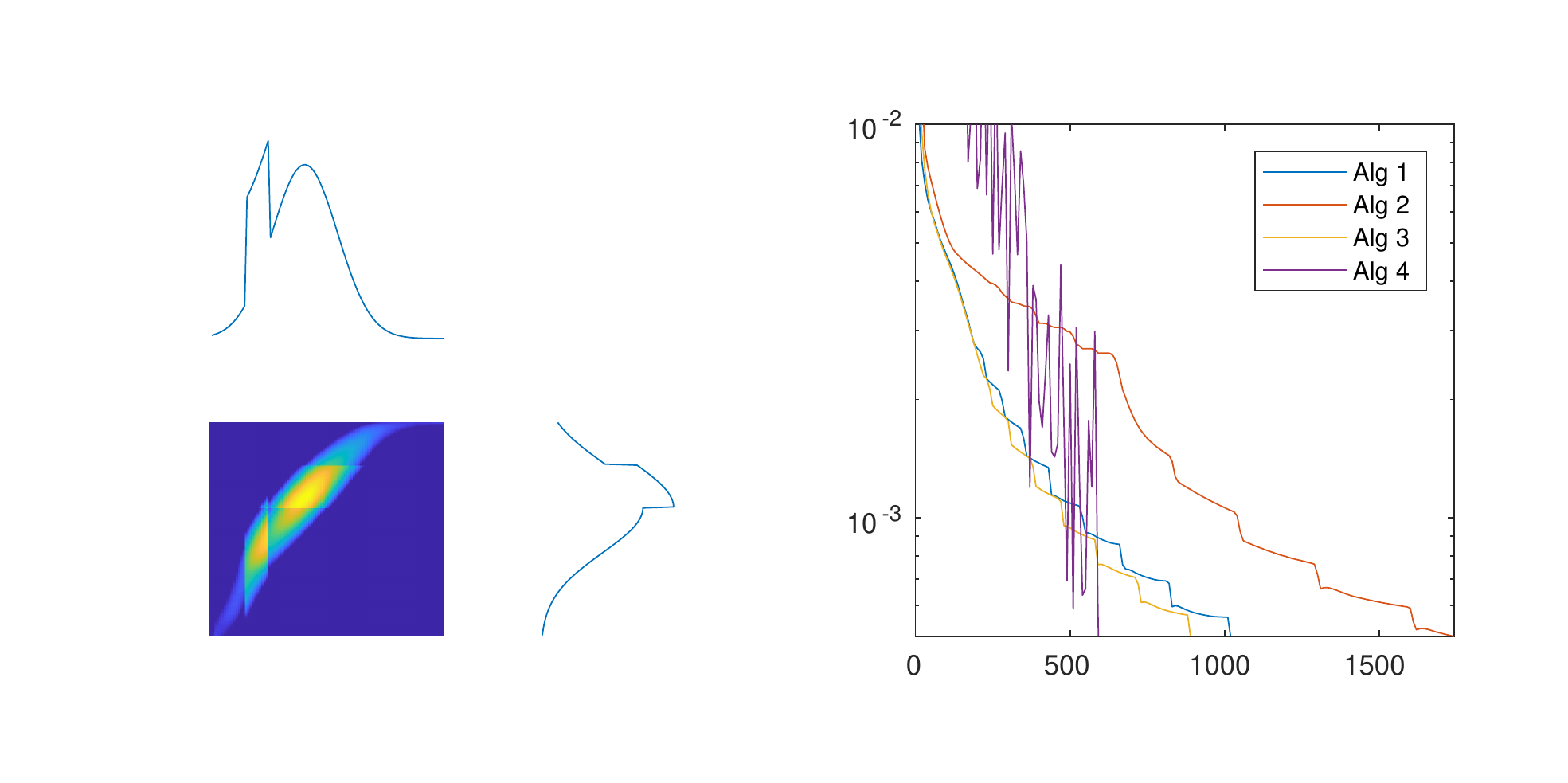}\caption{$\gamma=50$}
    \end{subfigure}
    \begin{subfigure}[b]{0.45\textwidth}
      \includegraphics[width=\textwidth]{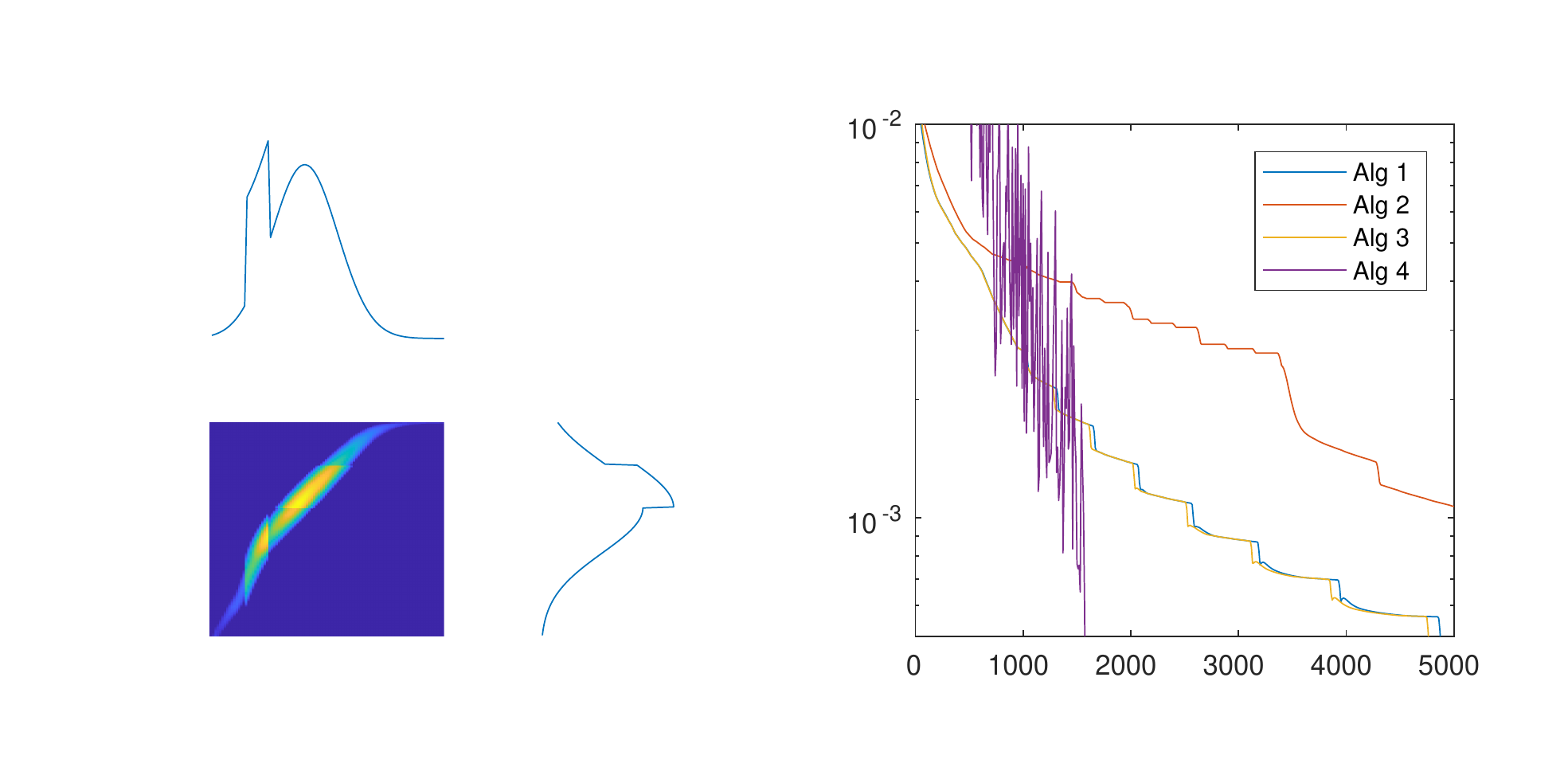}\caption{$\gamma=10$}
    \end{subfigure}
    \begin{subfigure}[b]{0.45\textwidth}
      \includegraphics[width=\textwidth]{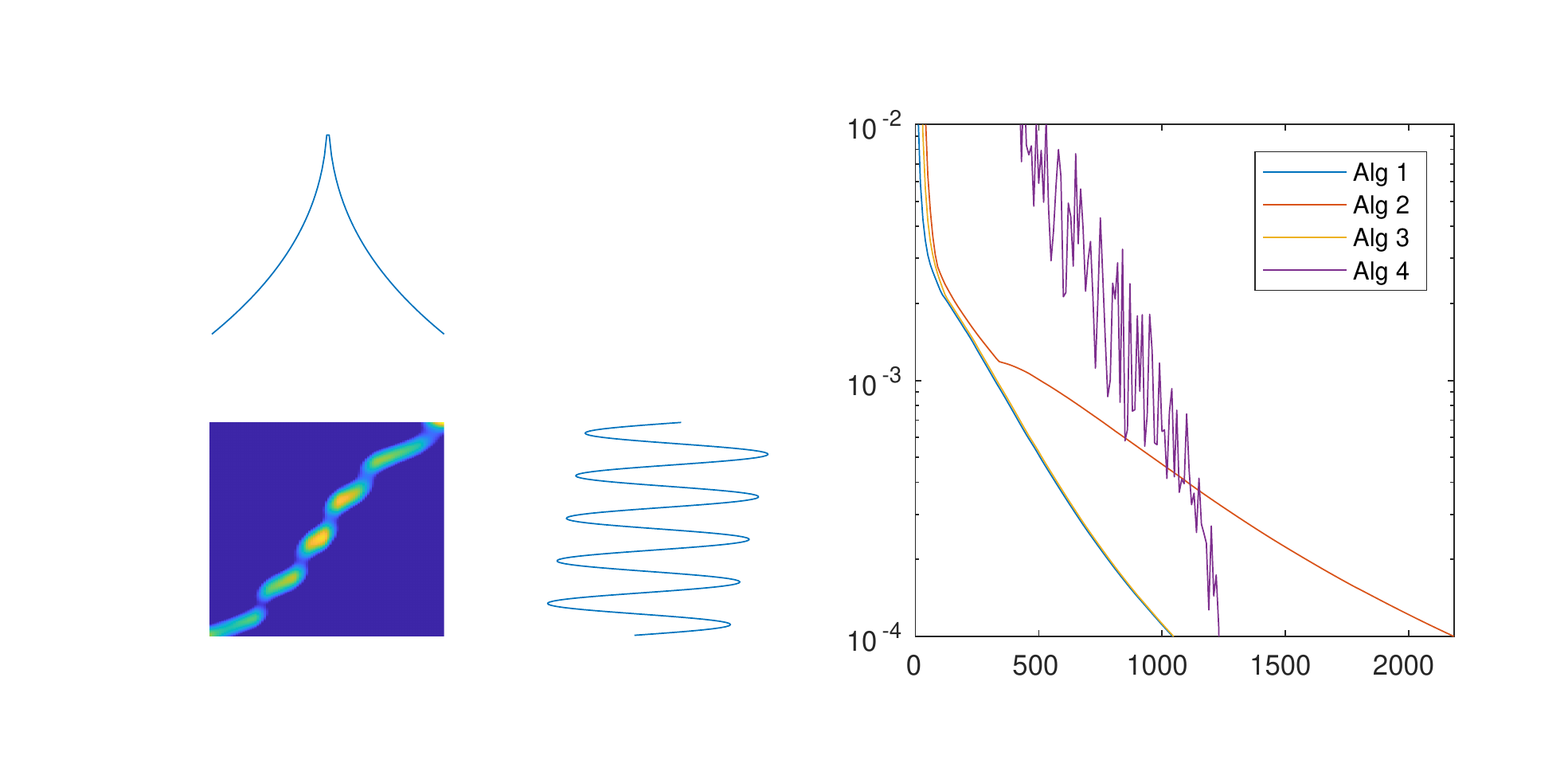}\caption{$\gamma=10$}
    \end{subfigure}
    \begin{subfigure}[b]{0.45\textwidth}
      \includegraphics[width=\textwidth]{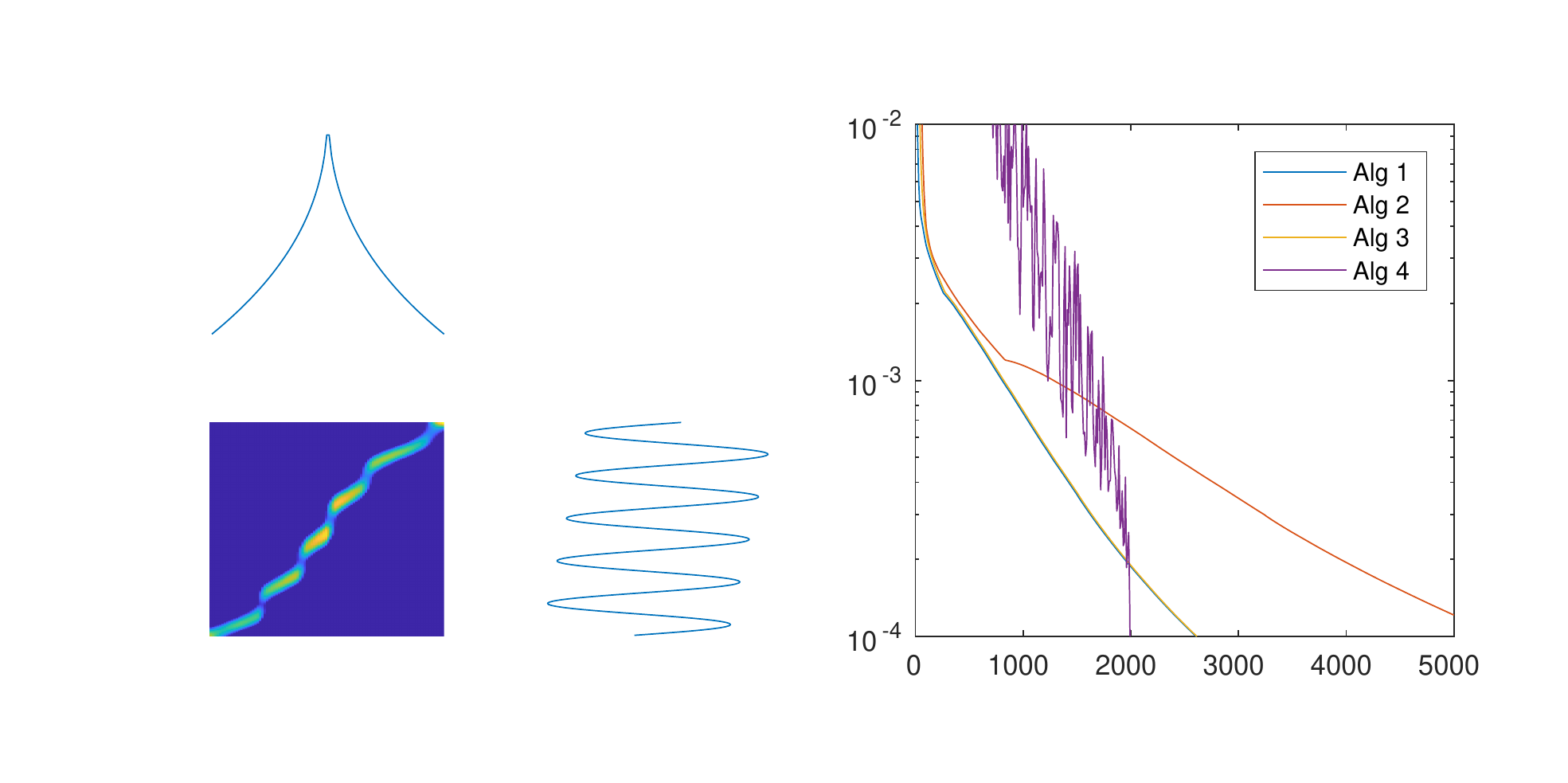}\caption{$\gamma=4$}
    \end{subfigure}
  \caption{Results for one-dimensional examples with $c(x,y) = |x-y|^{2}$. Each subfigure on the left: Marginals and optimal plan. Each subfigure on the right: maximal violation of constraints over iteration count.}
  \label{fig:example_1d_quad}
\end{figure}%

\begin{figure}
  \centering
    \begin{subfigure}[b]{0.45\textwidth}
      \includegraphics[width=\textwidth]{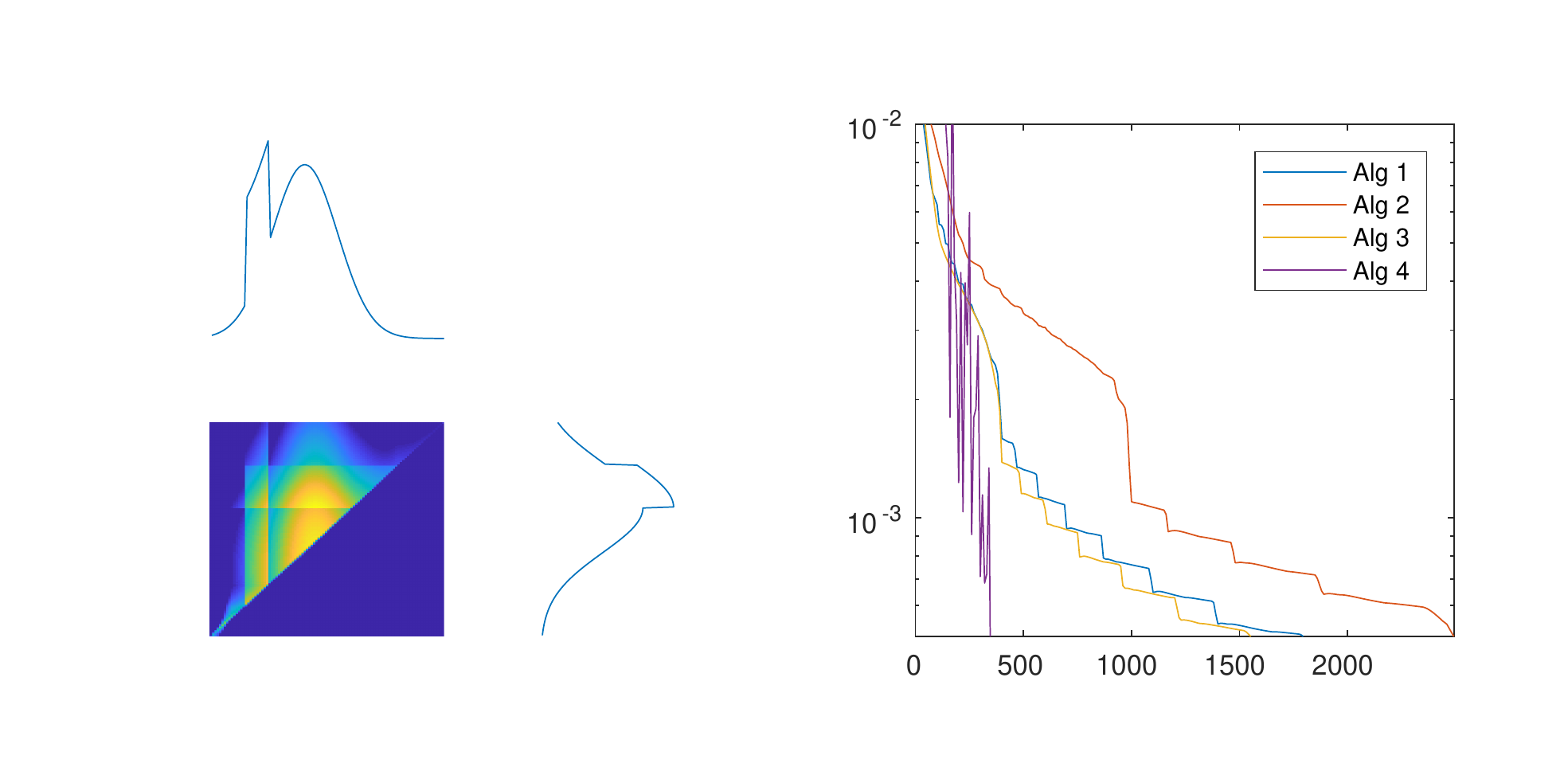}\caption{$\gamma=100$}
    \end{subfigure}
    \begin{subfigure}[b]{0.45\textwidth}
      \includegraphics[width=\textwidth]{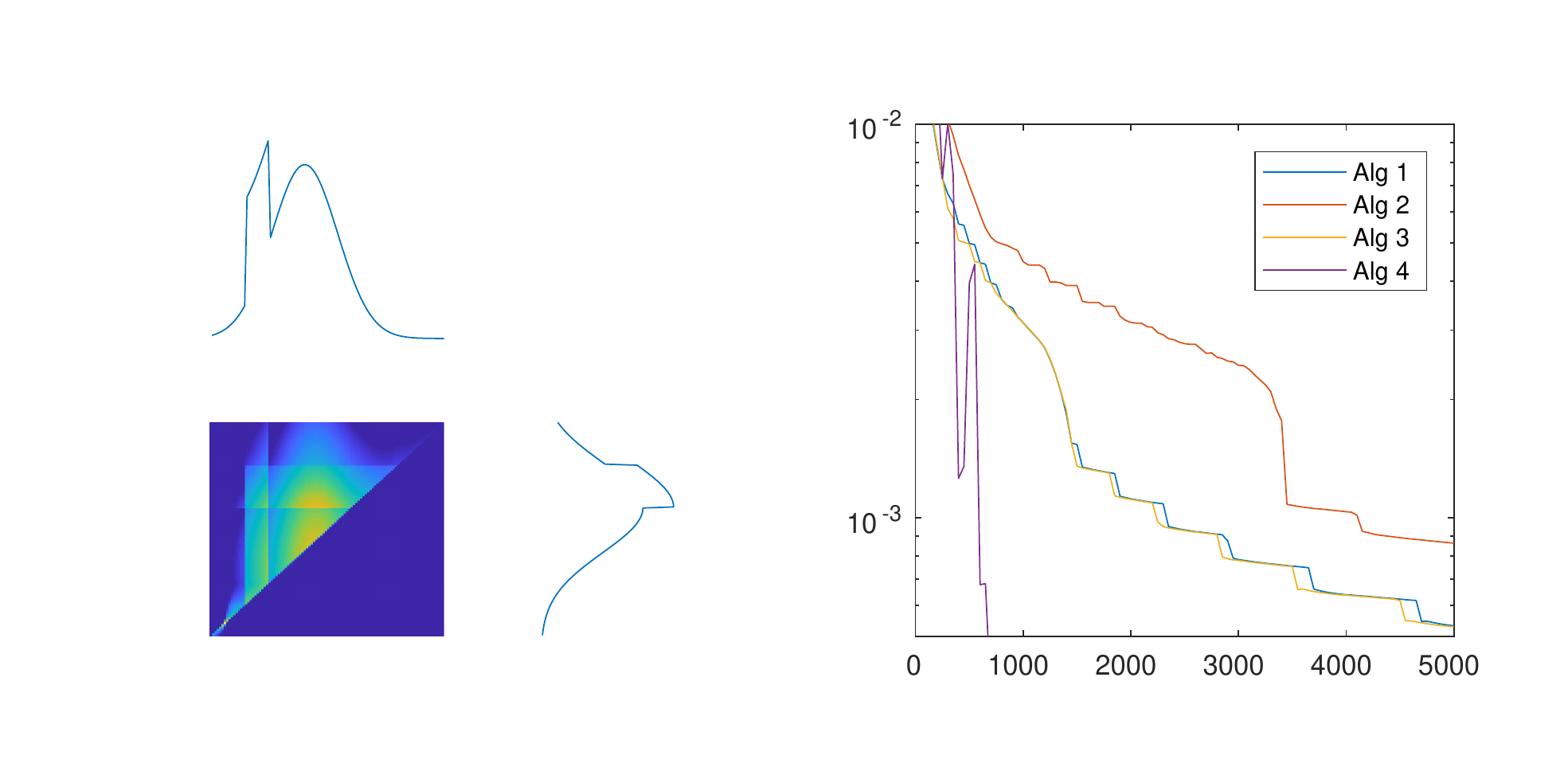}\caption{$\gamma=50$}
    \end{subfigure}
    \begin{subfigure}[b]{0.45\textwidth}
      \includegraphics[width=\textwidth]{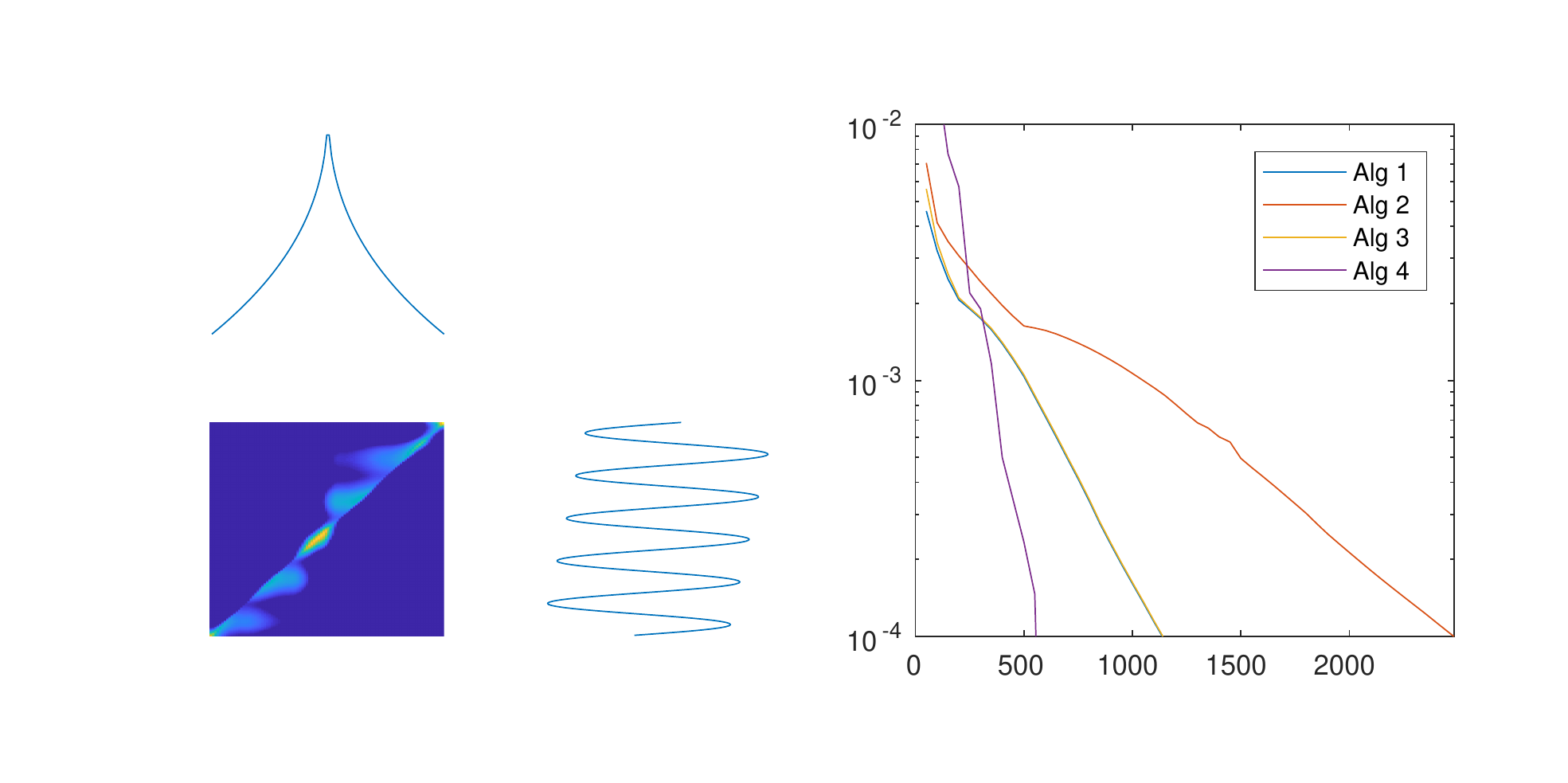}\caption{$\gamma=50$}
    \end{subfigure}
    \begin{subfigure}[b]{0.45\textwidth}
      \includegraphics[width=\textwidth]{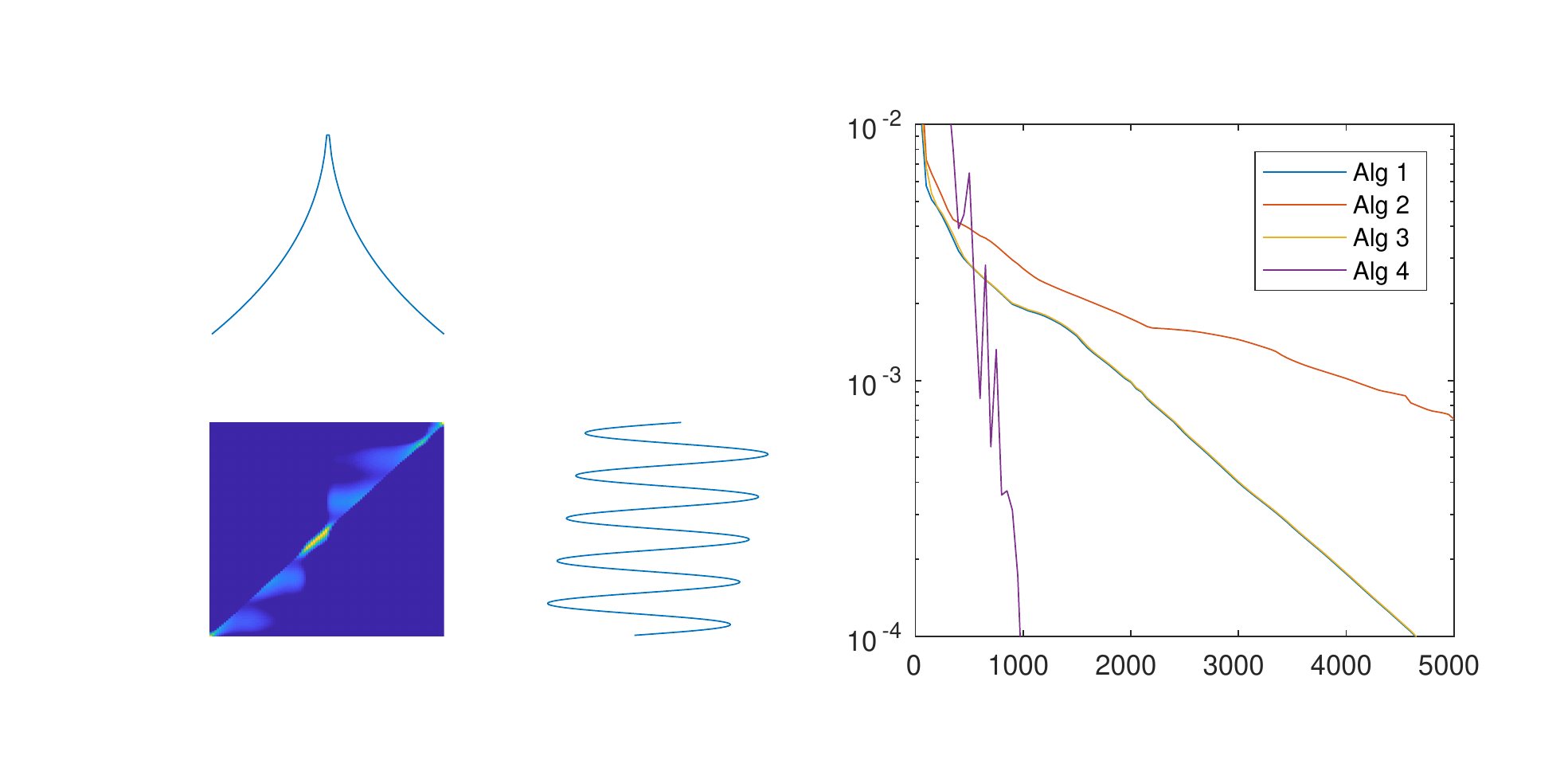}\caption{$\gamma=15$}
    \end{subfigure}
  \caption{Results for one-dimensional examples with $c(x,y) = |x-y|$. Each subfigure on the left: Marginals and optimal plan. Each subfigure on the right: maximal violation of constraints over iteration count.}
  \label{fig:example_1d_abs}
\end{figure}%

\small

\bibliographystyle{plain}
\bibliography{literature}

\end{document}